\newcommand{\C}[1]{\left|#1\right|}
\newtheorem{theorem}{Theorem}[section]
\newtheorem*{thm1}{Theorem~\ref{thm:choice}}
\newtheorem{lemma}[theorem]{Lemma}
\newtheorem{proposition}[theorem]{Proposition}
\theoremstyle{definition}
\newtheorem{definition}[theorem]{Definition}
\def\NN{{\mathbb N}}
\def\FL#1{\left\lfloor{#1}\right\rfloor}
\def\FR{\frac}
\def\st{\colon\,}
\def\VEC#1#2#3{#1_{#2},\ldots,#1_{#3}}
\def\esub{\subseteq}
\def\FL#1{\left\lfloor{#1}\right\rfloor}
\begin{document}

\title{Coloring, sparseness, and girth}

\author{
Noga Alon\thanks{Sackler School of 
Mathematics and Blavatnik School of Computer
Science, Tel Aviv University, Tel Aviv 69978, 
Israel and School of Mathematics,
Institute for Advanced Study, Princeton, NJ 08540. Email: nogaa@tau.ac.il.
Research supported in part by a USA-Israeli BSF grant, 
by an ISF grant, by the
Israeli I-Core program and by the Oswald Veblen Fund.}\and
Alexandr Kostochka\thanks{Department of Mathematics, University of Illinois,
USA and Zhejiang Normal University, China, kostochk@math.uiuc.edu.  Research
supported in part by NSF grant    DMS-1266016.}\and
Benjamin Reiniger\thanks{Department of Mathematics, University of Illinois,
reinige1@illinois.edu.}\and
Douglas B. West\thanks{Departments of Mathematics, 
Zhejiang Normal University,
China, and University of Illinois, USA , west@math.uiuc.edu.  Research
supported by Recruitment Program of Foreign Experts, 1000 Talent Plan, State
Administration of Foreign Experts Affairs, China.}\and
Xuding Zhu\thanks{Department of Mathematics, Zhejiang Normal University,
xudingzhu@gmail.com.  Research supported by CNSF 11171310.}
}
\date{\today}
\maketitle

\begin{abstract}
An \emph{$r$-augmented tree} is a rooted tree plus $r$ edges added from each
leaf to ancestors.  For $d,g,r\in\mathbb{N}$, we construct a bipartite
$r$-augmented complete 
$d$-ary tree having girth at least $g$.  The height of
such trees must grow extremely rapidly in terms of the girth.

Using the resulting graphs, we construct sparse non-$k$-choosable bipartite
graphs, showing that maximum average degree at most $2(k-1)$ is a sharp
sufficient condition for $k$-choosability in bipartite graphs, even when
requiring large girth.  We also give a new simple construction of 
non-$k$-colorable
graphs and hypergraphs with any girth $g$.
\end{abstract}

\baselineskip=16pt

\section{Introduction}

A graph $G$ is \emph{$k$-choosable} if, for every way of assigning a list $L(v)$
of $k$ colors to each vertex $v\in V(G)$, there is a proper coloring $f$ of
$G$ with $f(v)\in L(v)$ for all $v$.  The \emph{choice number} of a graph
is the least $k$ such that it is $k$-choosable.  If every subgraph has average
degree less than $k$, then it has a vertex with degree less than $k$, and
inductively it is $k$-choosable.

For bipartite graphs, one can guarantee $k$-choosability with average degree
up to $2(k-1)$.  Using (an early version of) 
the Combinatorial Nullstellensatz \cite{Al}, Alon and
Tarsi~\cite{AT} proved Theorem~\ref{choose} below, 
which implied the conjecture
of~\cite{ERT} that planar bipartite graphs are $3$-choosable.  As mentioned
in~\cite{AT}, another route to the result was subsequently 
noted by Bondy, Boppana, and Siegel, as follows.
A \emph{kernel} of a digraph is an independent set $S$ containing a
successor of every vertex outside $S$.  
If a graph $G$ has an orientation $D$
with maximum outdegree less than $k$, 
and every induced subdigraph of $D$ has a
kernel, then inductively $G$ is $k$-choosable.  Richardson~\cite{R} proved that
every digraph with no odd cycle has a kernel.  Hakimi~\cite{H} proved that 
$G$ has an orientation with maximum outdegree at most $k-1$ when all induced
subgraphs have average degree at most $2(k-1)$.

\begin{theorem}[\cite{AT}]\label{choose}
If $G$ is a bipartite graph such that every subgraph has average degree at
most $2(k-1)$, then $G$ is $k$-choosable.
\end{theorem}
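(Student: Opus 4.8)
The plan is to assemble the three ingredients already isolated above: Hakimi's orientation theorem, Richardson's theorem on kernels, and the standard ``kernel method'' for list coloring. Since every subgraph of $G$ has average degree at most $2(k-1)$, Hakimi's theorem~\cite{H} provides an orientation $D$ of $G$ in which every vertex has outdegree at most $k-1$; fix such a $D$.

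Next I would observe that, $G$ being bipartite, $D$ contains no directed cycle of odd length, and neither does any subdigraph of $D$, since deleting vertices creates no new cycles. Hence, by Richardson's theorem~\cite{R}, every induced subdigraph of $D$ has a kernel.

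Now for the coloring. Let each $v\in V(G)$ be assigned a list $L(v)$ with $|L(v)|=k$. I would process colors one at a time, maintaining the invariant that the current digraph $D'$ (obtained from $D$ by deleting the vertices colored so far) satisfies $|L(v)|\ge d^+_{D'}(v)+1$ for every vertex $v$ that remains; this holds initially because $|L(v)|=k\ge(k-1)+1\ge d^+_D(v)+1$. As long as some vertex is uncolored, choose a color $c$ lying in some remaining list, let $V_c$ be the set of remaining vertices whose list contains $c$, take a kernel $S$ of $D'[V_c]$, give color $c$ to every vertex of $S$ (legitimate since $S$ is independent in $G$), delete $S$, and delete $c$ from the list of every remaining vertex in $V_c$. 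A vertex whose list is unchanged keeps the invariant, since its outdegree cannot grow; a vertex $v\in V_c\setminus S$ loses $c$ from its list but, as $S$ is a kernel of $D'[V_c]$, it has an out-neighbor in $S$, hence also loses at least one from its outdegree. So the invariant survives each round, every remaining list stays nonempty, and since $S\ne\nul$ whenever $V_c\ne\nul$ each round colors at least one vertex; thus the process halts with a proper $L$-coloring of $G$.

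The only point needing care is this invariant step, namely that every still-uncolored vertex which loses $c$ from its list simultaneously loses an out-neighbor --- but that is precisely the defining property of a kernel, so the whole argument is short once Hakimi's and Richardson's theorems are granted. (An alternative following~\cite{AT} more closely is to apply the Combinatorial Nullstellensatz~\cite{Al} to the graph polynomial $\prod_{uv\in E(G)}(x_u-x_v)$ together with the Alon--Tarsi criterion: with $D$ as above, one compares the numbers of even and odd Eulerian subdigraphs of $D$, and because $G$ is bipartite every Eulerian subdigraph has an even number of edges, so there are no odd ones and the relevant coefficient is nonzero.) Note that bipartiteness enters only through the guarantee of kernels; girth is irrelevant here and is needed only for the later sparseness constructions.
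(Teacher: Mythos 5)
Your proof is correct and follows exactly the route the paper sketches in its introduction: Hakimi's theorem gives an orientation with outdegrees at most $k-1$, bipartiteness rules out odd directed cycles so Richardson's theorem supplies kernels in every induced subdigraph, and the kernel method (colour a kernel of the vertices that still have colour $c$ in their lists, remove it, repeat) yields the $L$-colouring. The paper states Theorem~\ref{choose} as a cited result rather than proving it, but the Bondy--Boppana--Siegel kernel argument you wrote out is precisely the alternative proof the paper points to, and your parenthetical remark about the Eulerian-subdigraph/Combinatorial Nullstellensatz route matches the original Alon--Tarsi proof.
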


We show that Theorem~\ref{choose} is sharp in a strong sense: we construct
non-$k$-choosable bipartite graphs $G$ such that after deleting any edge from 
$G$, all subgraphs of the remaining graph have average degree at most $2(k-1)$.
Thus our graphs are $(k+1)$-choice-critical.
Furthermore, such examples exist with arbitrarily large girth.  We prove the
following theorem.

\begin{theorem}\label{thm:choice}
For $g,k\in\mathbb{N}$, there is a bipartite graph $G$ with girth at least $g$
that is not $k$-choosable even though every proper subgraph has average degree
at most $2(k-1)$.
\end{theorem}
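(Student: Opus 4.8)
The plan is to let $G$ be (a mild modification of) one of the graphs produced by Theorem~\ref{thm:raugtrees}, equipped with a list assignment that cannot be satisfied. Fix $g$ and $k$ and set $d=k-1$: every internal vertex of our tree will have exactly $k-1$ children and every leaf will emit exactly $r=k-1$ back-edges to ancestors, so that in $G$ each leaf has exactly $k$ ``up-neighbours'' (its parent together with its $k-1$ back-edge targets). Apply Theorem~\ref{thm:raugtrees} with a girth parameter slightly larger than $g$ to obtain a bipartite $(k-1)$-augmented complete $(k-1)$-ary tree $T$ of height $h$ (with $h$ forced to be enormous) and girth at least $g$; when doing so we also insist that the back-edges are routed ``high'' and spread out, so that no vertex of $T$ receives very many of them and, for any vertex $v$ outside the top few levels, no back-edge has both ends inside the subtree rooted at $v$. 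A short count then gives $\C{E(T)}=(k-1)\C{V(T)}$ exactly, so $T$ itself is $k$-choosable; we therefore take $G$ to be $T$ together with one extra edge joining two deep internal vertices that lie in different principal subtrees and on opposite sides of the bipartition. Since those two vertices are far apart in $T$, this keeps $G$ bipartite of girth at least $g$, and now $\C{E(G)}=(k-1)\C{V(G)}+1$.

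Next I would build the list assignment $L$, with all lists of size $k$ drawn from a fixed palette, so that colours ``funnel'' up the tree. The lists on the underlying tree $T'=G-\{\text{leaves}\}$ are chosen so that, in any proper $L$-colouring of $G$, the colour of each internal vertex $v$ is forced: the $k-1$ subtrees below $v$ each contribute a distinct forbidden colour to $v$ through their structure and back-edges (an induction on depth), and $L(v)$ is the union of those $k-1$ colours with exactly one more, which must then be the colour of $v$. Finally, the back-edge targets of each leaf $\ell$ and the list $L(\ell)$ are chosen so that the (forced) colours at $\ell$'s $k$ up-neighbours are precisely the $k$ elements of $L(\ell)$; since all those colours already appear on $N_G(\ell)$, the leaf $\ell$ has no available colour, while the single extra edge of $G$ is exactly what pushes the very top of this scheme from ``almost forced'' to ``impossible''. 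Hence $G$ has no proper $L$-colouring, so $G$ is not $k$-choosable.

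To see that every proper subgraph $H\subsetneq G$ has $\C{E(H)}\le(k-1)\C{V(H)}$: deleting the back-edges and the one extra edge leaves a forest, so $H$ has at most $\C{V(H)}-1$ forest edges, and all remaining edges of $H$ are back-edges (each incident to a leaf, at most $k-1$ per leaf) or possibly the extra edge. Writing $a$ and $b$ for the numbers of internal and leaf vertices of $H$, the equalities $d=r=k-1$ make the resulting bound collapse to exactly the threshold when $H=G-(\text{edge})$, and it is strictly below the threshold for every other proper $H$ precisely because the back-edges were routed high and spread out (so no tight ``subtree-shaped'' subgraph exists) and because the extra edge was placed so that no tight proper subgraph of $T$ contains both of its ends. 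Thus every proper subgraph of $G$ has average degree at most $2(k-1)$ and so, by Theorem~\ref{choose}, is $k$-choosable; $G$ is therefore $(k+1)$-choice-critical, as promised.

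The step I expect to be the genuine obstacle is constructing the list assignment and proving the funnelling: one must choose the lists on $T'$ \emph{and} the back-edge targets of every leaf so simultaneously that (i) the set of available colours at each internal vertex collapses to a single value, (ii) at each leaf the forced values of its $k$ up-neighbours form exactly its list, and (iii) the whole scheme stays consistent even though the back-edges create many long cycles. This is where the completeness of the tree (enough children to forbid all but one colour) and its enormous height (enough legal ancestors, on the correct side of the bipartition and far enough above, for every leaf to reach the ancestors it needs) are indispensable, and it is the technical heart of the argument. By comparison, the density bookkeeping of the third paragraph and the checks that the modifications preserve bipartiteness and girth at least $g$ are routine once the parameters and the routing of the back-edges are pinned down.
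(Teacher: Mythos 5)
The core of your plan — forcing colours to ``funnel up the tree'' so that each internal vertex's colour is uniquely determined, and then arranging each leaf's list to equal the forced colours of its $k$ up-neighbours — is circular and does not work as stated. Back-edges go from leaves to ancestors, so a leaf's colour is not determined until the colours of its ancestors are known; but you simultaneously claim that each internal vertex's colour is forced by the subtrees below it, including the leaves. There is no consistent order in which these forcings can be established. The paper avoids this entirely by \emph{not} forcing a unique colouring: it uses $d=k$ children (not $k-1$), puts list $[k]$ on every internal vertex, and observes that \emph{any} proper colouring $f$ of the tree determines a unique $f$-path to some leaf $v$; then a copy of $G_{k-1}$ (the non-$(k-1)$-choosable graph from the previous inductive step) is attached at each leaf, with each list augmented by the one tree-colour that $f$ must place on that vertex's tree-neighbour. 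Thus every $f$, not one particular forced one, leads to an unsatisfiable residual instance of $G_{k-1}$. With $d=k-1$ the root has a free colour left over no matter what its children do, so no amount of list engineering on a $(k-1)$-ary tree alone can block it — you would genuinely need the inductive gadget at the leaves, which your proposal omits.

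The density bookkeeping in your third paragraph is also not tight. With $d=r=k-1$ you correctly get $|E(T)|=(k-1)|V(T)|$, but the clean argument (used in the paper) is an orientation in which every vertex has outdegree $k-1$ except a single root with outdegree $k$, together with the property that every vertex is reachable from the root; then for any proper subgraph some tail loses an out-edge, giving $|E(H)|\le(k-1)|V(H)|$. If you instead add a single extra edge ``joining two deep internal vertices,'' the vertex you assign outdegree $k$ is a deep vertex from which the rest of the graph is not reachable, so the reachability step fails and the bound $|E(H)|\le(k-1)|V(H)|$ is no longer guaranteed for subgraphs missing a vertex. Your appeal to ``back-edges routed high and spread out'' to rule out dense subtree-shaped subgraphs is not a proof, and Theorem~\ref{thm:raugtrees} does not give you control over where the augmenting edges land. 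Finally, you never say what list assignment you use, acknowledging that this is ``the genuine obstacle''; the paper resolves that obstacle by induction on $k$, with a reduced $(k,r,2g)$-graph, $r=|V(G_{k-1})|-1$, copies of $G_{k-1}$ glued at the leaves, a careful shift of some augmenting endpoints to keep bipartiteness, and lists $L(w_v)=L'(w)\cup\{c\}$ keyed to the $f$-path mechanism. Your proposal does not contain that construction or an equivalent substitute, so the central claim — that $G$ is not $k$-choosable — is not established.
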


To prove this, we consider a new problem.  Let an \emph{$r$-augmented tree} be
a graph consisting of a rooted tree (called the \emph{underlying tree}) plus
edges from each leaf to $r$ of its ancestors (called \emph{augmenting edges}).
A \emph{complete $d$-ary tree of height $m$} is a rooted tree whose internal
vertices have $d$ children and whose leaves have distance $m$ from the root.
For $d,r,g\in\NN$, let a \emph{$(d,r,g)$-graph} be a bipartite $r$-augmented
complete $d$-ary tree with girth at least $g$.

\begin{theorem}\label{thm:raugtrees}
For $d,r,g\in\mathbb{N}$, there exists a $(d,r,g)$-graph.
\end{theorem}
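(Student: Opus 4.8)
The plan is to argue by induction on $g$. Since the graphs we construct will be bipartite, their girth is even (or infinite), so it is enough to handle even $g$ --- an odd target $g$ being replaced by $g+1$ --- and we may take $g\le 4$ as the base case. For the base case, let $m=2r+1$, take the complete $d$-ary tree of height $m$, and join each leaf to its ancestors at depths $m-3,m-5,\dots,m-2r-1$; these depths are nonnegative, pairwise distinct, less than $m-1$, and all congruent to $m-1$ modulo $2$, so the resulting graph is simple, and the $2$-coloring of the tree by parity of depth is still proper, so the graph is bipartite and therefore has girth at least $4$. The inductive step passes from a $(d,r,g-2)$-graph (applied to suitable parameters, and on various smaller trees) to a $(d,r,g)$-graph; essentially all of the work is there.

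For the inductive step I would first prove a structural lemma about cycles. Root an $r$-augmented tree $G$ with underlying complete $d$-ary tree of height $m$, and let $\delta$ be the depth function. An internal vertex has exactly one neighbor of smaller depth (its parent), while a leaf has \emph{all} its neighbors --- its parent and the ancestors reached by its augmenting edges --- of smaller depth. Hence on any cycle $C$ the vertices of locally maximal depth are precisely the leaves lying on $C$, and $C$ breaks into arcs, each joining two consecutive such leaves $a,b$ and descending to a local minimum (a common ancestor of $a$ and $b$) before ascending to the next leaf. Each arc therefore begins with an augmenting edge of $a$ or the tree edge to $a$'s parent, then climbs the tree, then descends the tree, and finally reaches $b$ by a single augmenting or tree edge. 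A short computation then shows that, provided every augmenting edge of every leaf goes to an ancestor at depth at most $m-(g-1)$, any arc that uses an augmenting edge has length at least $g$ except in a few degenerate configurations, the principal one being an arc of length $2$ of the form $a$--$w$--$b$ where $w$ is a common augmenting target of $a$ and $b$. Consequently every cycle of $G$ shorter than $g$ is built only from such degenerate arcs; in particular it lifts a short cycle in the bipartite incidence graph $B$ with parts ``leaves'' and ``augmenting targets'' (together with the leaf--parent edges), and its leaves pairwise have deep least common ancestors.

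It therefore suffices to choose the augmenting targets so that: (i) every augmenting edge runs from its leaf to an ancestor at odd distance at least $g-1$ (this makes $G$ simple and bipartite and controls single-edge cycles); (ii) the $r$ depths used at any one leaf are pairwise at least $g-2$ apart (controlling cycles through a single leaf); and (iii) any two leaves with a sufficiently deep common ancestor have augmenting-target sets so spread out that no short cycle --- in $B$ or as a tree/augment hybrid --- can form on them. I would arrange (iii) inside a very tall complete $d$-ary tree by reserving $r$ deep, mutually far-apart bands of levels, one per augmenting edge, and within each band deciding each leaf's target by overlaying, on the complete $d$-ary subtrees hanging below the top of that band, a high-girth assignment of the same type supplied by the induction hypothesis (now for a smaller tree). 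Then $B$ restricted to each band is a disjoint union of high-girth graphs, and any cycle of $B$ that crosses two bands is long because consecutive bands are far apart in the tree; after checking parities (so that each augmenting edge still joins vertices at odd distance, keeping $G$ bipartite) one obtains a $(d,r,g)$-graph.

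The main obstacle is exactly step (iii), hand in hand with making the structural lemma airtight: one must be certain that \emph{every} cycle of length less than $g$ --- including those threading many leaves and alternating tree arcs with augmenting arcs --- is accounted for by the degenerate configurations, and that the banded, recursive choice of targets really destroys all of them without the tree paths lying between bands creating new short cycles. Granting this, each inductive step raises $g$ by $2$ while replacing the height of the tree by something like its own exponential, so the height required for a $(d,r,g)$-graph is a finite tower of exponentials in $g$; in particular a $(d,r,g)$-graph exists for all $d,r,g\in\mathbb{N}$, which also matches the remark that the height must grow extremely rapidly with the girth.
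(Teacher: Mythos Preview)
Your proposal is a plan rather than a proof: the base case is correct and matches the paper's Lemma~\ref{lem:11}, but the inductive step is not carried out. You yourself flag step~(iii) as ``the main obstacle,'' and indeed it is; the banded construction does not specify how an assignment from the induction hypothesis is to be overlaid on each band in a way compatible with the complete $d$-ary structure of the big tree (in particular, which parameters $d',r'$ are invoked and why the leaves of the small graph line up with the leaves of the big one), nor does it verify that cycles threading several bands are long. The structural lemma also needs more care: besides the length-$2$ arc through a common target, there are short arcs using two augmenting edges $aw$ and $bw'$ whenever $w$ and $w'$ lie close together on a common ancestor path, and short cycles may combine many arcs of several types.

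There is also a quantitative red flag. You claim that each step ``replaces the height of the tree by something like its own exponential,'' giving a tower of height $O(g)$. But Theorem~\ref{thm:heightbound} shows that every $(2,1,g)$-graph has height at least a tower of height roughly $2^{g/4}$; hence no construction whose height grows merely as a tower of height $O(g)$ can exist. Any correct scheme must involve a heavier recursion than the one you describe.

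The paper avoids the arc-by-arc analysis by a \emph{double} induction on $(g,r)$. Lemma~\ref{lem:12} raises the girth by $2$ for $r=1$: start from a $(d,d^2,g)$-graph, hang a depth-$2$ subtree under each leaf, and redistribute the $d^2$ augmenting edges to the $d^2$ new leaves; contracting these subtrees sends any short cycle to a short closed walk, hence a short cycle, in the original graph. Lemma~\ref{lem:13} raises $r$ by $1$ at fixed girth: take a $(d^{m_1},r,g)$-graph, thin it to branching factor $d$ except at the last level, and replace each bottom star by a copy of a $(d,1,g)$-graph of odd height $m_1$; again a contraction argument handles all cycles at once. The devices you are missing are this change of branching factor (so that after grafting one still has a complete $d$-ary tree) and the contraction argument, which replaces the delicate arc classification by a one-line reduction to a smaller graph already known to have large girth.
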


In Section~\ref{sec:raugtrees} we prove Theorem~\ref{thm:raugtrees}, and 
in Section~\ref{sec:appl} we give several applications.  
In Section~\ref{sec:chromatic} we present a simple construction of $t$-uniform
hypergraphs with arbitrarily large girth and chromatic number, for all $t$.
For $t=2$, Erd\H{o}s~\cite{Er} used the probabilistic method 
to prove existence; see also \cite{EH,KN} for subsequent work.
Explicit constructions followed in~\cite{Kr,Lo,NR}. These are
inductive and, except for \cite{Kr},
use hypergraphs with large edges.  Using $(d,r,g)$-graphs (built
inductively), our construction is non-inductive and does not involve
hypergraphs with larger edges. Moreover, the same method
provides explicit high girth hypergraphs of any uniformity 
based on $(d,r,g)$-graphs, without using
hypergraphs (besides those constructed) in the process. 

We prove Theorem~\ref{thm:choice} in Section~\ref{sec:choice}.  Stronger
versions involving restricted list assignments are proved in
Section~\ref{sec:variations}.  For example, when the lists at adjacent vertices
are disjoint, every coloring chosen from the lists is proper.  We extend the
analysis of the graph constructed for Theorem~\ref{thm:choice} by constructing
a $k$-list assignment in which any two adjacent lists have exactly one common
color and yet no proper coloring can be chosen.

\nobreak
One can also restrict list assignments by bounding the size of the union of the
lists.  For bipartite graphs, a proper coloring can be chosen from any
$k$-lists whose union has size at most $2k-2$.  We prove that this is sharp
(for any girth) by constructing a bipartite graph with $k$-lists whose union
has size $2k-1$ from which no proper coloring can be chosen.

Finally, in Section~\ref{sec:height} we discuss the height of 
the trees used in
Theorem~\ref{thm:raugtrees}.  For fixed $d \geq 2$ and $r \geq 1$, 
we show that the height
must grow extremely rapidly in terms of the girth.


\section{Augmented trees}\label{sec:raugtrees}

In this section and throughout, we restrict $g$ to be even.  If there is a
$(d,r,g)$-graph, then let $m(d,r,g)$ denote the least height of the underlying
tree in such a graph (otherwise, let $m(d,r,g)=\infty$).
Theorem~\ref{thm:raugtrees} is the statement that
$m(d,r,g)$ is finite for all $d,r,g\in\NN$.  We prove this by double induction,
using the following three lemmas.
\begin{lemma}\label{lem:11}
For $d,r\in\NN$, we have $m(d,r,4) = 2r+1$.
\end{lemma}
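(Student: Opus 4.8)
The plan is to prove the two bounds $m(d,r,4)\le 2r+1$ and $m(d,r,4)\ge 2r+1$ separately, by elementary counting of ancestors according to depth parity. The starting observation is that a graph that is already bipartite has girth at least $4$ precisely when it is simple; thus a $(d,r,4)$-graph is exactly a \emph{simple} bipartite $r$-augmented complete $d$-ary tree. Fix the bipartition of the underlying tree into its even-depth and odd-depth vertices. Any augmenting edge joins a leaf to one of its ancestors; for the augmented graph to be bipartite the two endpoints must lie in different classes, so the leaf and that ancestor have depths of opposite parity, i.e.\ the tree-distance between them is odd. The only ancestor at tree-distance $1$ from a leaf is its parent, and an augmenting edge to the parent would duplicate a tree edge; so simplicity rules out distance $1$. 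Hence each of the $r$ augmenting edges at a leaf of depth $m$ must go to an ancestor at odd tree-distance at least $3$ from it.

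For the upper bound I would exhibit the construction with $m=2r+1$. A leaf then has depth $2r+1$, and its ancestors at odd distance at least $3$ are exactly those at depths $0,2,4,\dots,2r-2$, of which there are exactly $r$. Join every leaf to all $r$ of these ancestors. Every added edge joins an odd-depth vertex to an even-depth vertex, so the graph stays bipartite; no added edge is a parent edge and the $r$ ancestors chosen at each leaf are distinct, so the graph is simple; hence its girth is at least $4$, and therefore $m(d,r,4)\le 2r+1$.

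For the lower bound, suppose a $(d,r,4)$-graph has underlying tree of height $m$, and pick any leaf $v$, of depth $m$. Its $r$ augmenting edges reach $r$ distinct ancestors, each at a depth of parity opposite to $m$ and none of them the parent (depth $m-1$). Counting the ancestors at depths in $\{0,\dots,m-1\}$ of parity opposite to $m$, then deleting the one at depth $m-1$, leaves $(m-1)/2$ candidates when $m$ is odd and $(m-2)/2$ candidates when $m$ is even; in either case this count must be at least $r$, which forces $m\ge 2r+1$. Combining this with the construction yields $m(d,r,4)=2r+1$.

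There is no genuinely hard step here; the one point requiring care is the split into the two parities of $m$ in the lower-bound count, together with keeping straight that the parent (distance $1$) is the single ancestor excluded by simplicity rather than by bipartiteness. Everything else is the bookkeeping described above.
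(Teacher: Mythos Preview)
Your proof is correct and matches the paper's approach: the paper gives the same construction (join each leaf to its $r$ non-parent ancestors at odd distance) in a one-line sketch, and your argument fills in the details the paper leaves implicit, including the explicit parity count that forces $m\ge 2r+1$.
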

\begin{lemma}\label{lem:12}
For $g,d\in\NN$ with $g$ at least $4$ and even,
$m(d,1,g+2) \leq 2+m(d,d^2,g)$.
\end{lemma}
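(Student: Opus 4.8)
The plan is to build a $(d,1,g+2)$-graph out of a $(d,d^2,g)$-graph by expanding each leaf into a height-$2$ tree that distributes its $d^2$ augmenting edges among the new leaves. We may assume $m(d,d^2,g)<\infty$, since otherwise the claimed inequality holds vacuously; fix a $(d,d^2,g)$-graph $H$ whose underlying tree $T$ has height $m:=m(d,d^2,g)$. At each leaf $\ell$ of $T$, let $a^\ell_1,\dots,a^\ell_{d^2}$ be the ancestors joined to $\ell$ by augmenting edges of $H$; these are distinct since $H$ is simple. Delete these augmenting edges, attach to $\ell$ a complete $d$-ary tree of height $2$ rooted at $\ell$, call its leaves $\ell^\ell_1,\dots,\ell^\ell_{d^2}$, and add the augmenting edge $\ell^\ell_j a^\ell_j$ for each $j\le d^2$. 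Let $G$ be the resulting graph and $T'$ its underlying tree. Then $T'$ is a complete $d$-ary tree of height $m+2$, each leaf of $T'$ is incident to exactly one augmenting edge, and that edge joins it to one of its ancestors; thus $G$ is a $1$-augmented complete $d$-ary tree of height $m+2$, and it remains only to show that $G$ is bipartite and has girth at least $g+2$, for then $m(d,1,g+2)\le m+2$.

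Bipartiteness is immediate: $H$ is connected and bipartite, so its unique $2$-coloring is the depth-parity coloring of $T$, and hence $\mathrm{depth}(\ell)-\mathrm{depth}(a^\ell_j)$ is odd for every augmenting edge of $H$. Since $\ell^\ell_j$ has depth $\mathrm{depth}(\ell)+2$ in $T'$, the difference $\mathrm{depth}(\ell^\ell_j)-\mathrm{depth}(a^\ell_j)$ remains odd, so the depth-parity coloring of $T'$ is a proper $2$-coloring of all of $G$.

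For the girth I would introduce the collapsing map $\phi\colon V(G)\to V(H)$ that fixes $V(T)$ pointwise and sends every new vertex of the tree attached at a leaf $\ell$ to $\ell$. Under $\phi$, each tree edge of $T'$ lying inside an attached tree is contracted (both endpoints have the same image), each tree edge of $T'$ lying in $T$ is preserved, and each augmenting edge $\ell^\ell_j a^\ell_j$ of $G$ is sent to the augmenting edge $\ell a^\ell_j$ of $H$. Let $C$ be any cycle of $G$; since $C$ is not contained in the forest $T'$ it uses at least one augmenting edge, say $s\ge 1$ of them. These $s$ edges have $s$ distinct leaf-endpoints (each new leaf is incident to only one augmenting edge), and since the targets $a^\ell_1,\dots,a^\ell_{d^2}$ at each leaf $\ell$ are distinct, their $\phi$-images are $s$ distinct augmenting edges of $H$. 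Cutting $C$ at its augmenting edges expresses it cyclically as $e_1R_1\cdots e_sR_s$, where $R_t$ is the unique $T'$-path from the ancestor-endpoint of $e_t$ to the leaf-endpoint of $e_{t+1}$ (indices taken cyclically); since this leaf-endpoint lies two levels below a leaf of $T$, exactly two edges of each $R_t$ are contracted by $\phi$, so the closed walk $W$ obtained from $C$ by applying $\phi$ and deleting trivial steps has $\mathrm{length}(W)=\mathrm{length}(C)-2s$. Finally, the paths $R_t$ contribute only tree edges of $H$ to $W$, so $W$ traverses $s$ distinct augmenting edges of $H$ exactly once each; hence the set of edges used by $W$ an odd number of times is nonempty and has all degrees even, so it contains a cycle of $H$. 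Therefore $\mathrm{length}(W)\ge g$, and $\mathrm{length}(C)=\mathrm{length}(W)+2s\ge g+2$.

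The one real obstacle here is the girth computation --- in particular, handling cycles of $G$ that run through several augmenting edges, bookkeeping the $2s$ edges lost to contraction, and invoking the (mod $2$) cycle-space fact to convert ``$W$ uses some edge an odd number of times'' into ``$W$ contains a genuine cycle of $H$, necessarily of length at least $g$.'' The bipartiteness check and the verification that $G$ is a $1$-augmented complete $d$-ary tree of height $m+2$ are routine.
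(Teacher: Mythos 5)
Your construction, your verification that $G$ is a $1$-augmented complete $d$-ary tree of height $m+2$, and your bipartiteness argument are all exactly the paper's. The girth argument is in the same spirit (contract the attached trees, land in $H$, and find a cycle of $H$ of length at least $g$), but one intermediate claim is false as stated: you assert that cutting $C$ at its augmenting edges yields arcs $R_t$, each running ``from the ancestor-endpoint of $e_t$ to the leaf-endpoint of $e_{t+1}$,'' hence each losing exactly two edges under $\phi$. This presumes that $C$ traverses every augmenting edge in the same sense (new-leaf to ancestor). That need not hold. A cycle can dip into a single attached tree $T_\ell$ through one augmenting edge (ancestor $\to$ new leaf), cross to a sibling new leaf, and leave through another augmenting edge (new leaf $\to$ ancestor); the corresponding arcs then go leaf-to-leaf and ancestor-to-ancestor, and the ancestor-to-ancestor arc contributes $0$ contracted edges while a sibling-to-sibling arc contributes only $2$. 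So $\mathrm{length}(C)=\mathrm{length}(W)+2s$ can fail (e.g.\ $s=2$ with only $2$ contracted edges).

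The final bound survives anyway, and with essentially your ingredients. The edges of $C$ lying inside the attached copies $T_\ell$ form a disjoint union of subpaths of $C$, each of length at least $2$ (any endpoint of such a subpath is either a new leaf or $\ell$, and the first step from a new leaf necessarily goes to a degree-$d+1$ internal vertex whose every incident edge lies in $T_\ell$). Since $s\ge 1$ there is at least one such subpath, so at least $2$ edges are contracted, giving $\mathrm{length}(C)\ge\mathrm{length}(W)+2$. Your cycle-space argument for $\mathrm{length}(W)\ge g$ is independent of the faulty decomposition and correct. Alternatively, the paper sidesteps the bookkeeping entirely: it supposes $\C{C}\le g$, notes at least one edge is contracted so the closed walk $C'$ has length strictly less than $g$, extracts a cycle of $H$ through the image of the augmenting edge, gets a contradiction, and then uses bipartiteness of $G$ to promote ``girth $>g$'' to ``girth $\ge g+2$.'' You should either repair the edge count as above or switch to the bipartiteness shortcut.
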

\begin{lemma}\label{lem:13}
With $d,r,g$ as above, $m(d,r+1,g) \le m_1+m_2-1$, where
$m_1=2\FL{\FR{m(d,1,g)}2}+1$ and $m_2=m(d^{m_1},r,g)$.
\end{lemma}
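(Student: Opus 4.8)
The plan is to graft a $(d,1,g)$-graph onto a $(d^{m_1},r,g)$-graph. I would first fix a $(d,1,g)$-graph $T_1$ whose underlying tree has height exactly $m_1$: such graphs exist at every height at least $m(d,1,g)$ (adjoin a fresh root above a fresh copy to raise the height by one), and taking the height to be the least odd value that is $\ge m(d,1,g)$, namely $2\FL{m(d,1,g)/2}+1=m_1$, is what will make the final graph bipartite. Next I would fix a $(d^{m_1},r,g)$-graph $H$ whose underlying tree $T_H$ is the complete $d^{m_1}$-ary tree of height $m_2=m(d^{m_1},r,g)$. The point of inflating the arity of $H$ to $d^{m_1}$ is that a block of $m_1$ consecutive levels of a $d$-ary tree holds exactly $d^{m_1}$ leaves; concretely, for every $k$ the complete $d$-ary tree of height $k$ embeds in the complete $d^{m_1}$-ary tree of height $k$ in a way that preserves depth and the ancestor relation (at each internal vertex use $d$ of the $d^{m_1}$ children).

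I would then build $G$ on the complete $d$-ary tree $T_G$ of height $m_1+m_2-1$, split into a height-$m_1$ ``top'' and, hanging below each of its $d^{m_1}$ depth-$m_1$ vertices $w$, a ``branch'' $D_w$ that is a complete $d$-ary tree of height $m_2-1$; the heights add as $m_1+(m_2-1)$. Put the vertices $w$ in bijection with the $d^{m_1}$ children of the root of $T_H$, and use the embedding above — shifted down by $m_1-1$ levels — to identify each $D_w$, together with the vertex just above $w$, with the subtree $H_w$ of $T_H$ below the corresponding child, together with the root of $T_H$; thus each leaf $\ell$ of $G$ becomes the image of a leaf $\hat\ell$ of $H$, with $H$-depth $j$ corresponding to $G$-depth $m_1-1+j$. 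Give $\ell$ the $r$ augmenting edges obtained by transporting the augmenting edges of $H$ at $\hat\ell$ along this identification (each lands on an ancestor of $\ell$ inside $D_w$ or on the vertex just above $w$), and give $\ell$ one further augmenting edge supplied by the $(d,1,g)$-structure carried by the top. The result is a complete $d$-ary tree carrying exactly $r+1$ augmenting edges at each leaf, of height $m_1+m_2-1$.

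What remains — and where almost all the work lies — is to show that $G$ is bipartite with girth at least $g$. For bipartiteness: two-color $T_G$ by depth-parity; a transported edge spans $G$-depths $m_1+m_2-1$ and $m_1-1+j$, differing by $m_2-j$, which is odd because $H$ is bipartite, while an edge from the top is odd because $T_1$ is bipartite and $m_1$ was chosen odd. For the girth: transported edges never leave a branch except to touch the single vertex above it, and the top carries only $T_1$'s own edges, so a cycle short enough to threaten the bound must live essentially inside one branch or inside the top. A cycle inside a branch maps forward, via the embedding, to a closed walk of the same length in $H$ — crucially, the embedding preserves depth and leaf-to-leaf distance, so a cycle through a common transported target becomes a cycle of $H$ through a common augmenting target — and $H$ has girth at least $g$; a cycle inside the top has length at least $g$ because $T_1$ does. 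I expect the main obstacle to be twofold: first, incorporating the $(r+1)$st edge so that it genuinely avoids short cycles (one must not simply route it from every leaf of a branch to the same ancestor of $w$, which would create $4$-cycles once $g\ge 6$); and second, the case analysis for the ``mixed'' cycles — those using both a transported edge and a top edge, or passing through a vertex that joins two branches — where one has to argue that every such cycle is forced to traverse an entire branch (of height $m_2-1$) or an entire copy of $T_1$, and hence is at least as long as $g$.
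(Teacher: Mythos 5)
Your construction is upside-down relative to what actually works, and that inversion is precisely what creates the gap you correctly flag at the end. You place the $(d,1,g)$-structure ($T_1$) at the \emph{top} of $T_G$ (height $m_1$) and the $(d^{m_1},r,g)$-structure ($H$) in the branches $D_w$ at the bottom (height $m_2-1$). But the leaves of $T_1$ then live at depth $m_1$, which are interior vertices of $T_G$, not its leaves (those are at depth $m_1+m_2-1$). So the one augmenting edge per leaf that $T_1$ "supplies" has nowhere natural to attach: the only obvious move, routing it from every leaf below $w$ to the $T_1$-mate of $w$, collapses to $4$-cycles, as you observe. You list this as an "expected obstacle," but it is not an obstacle to be finessed later — it is exactly the point of the lemma, and your decomposition cannot solve it. There is also a parity problem you didn't notice: a leaf of $G$ at depth $m_1+m_2-1$ joined to a $T_1$-ancestor at depth $i$ gives a span of $(m_1-i)+(m_2-1)$; since $T_1$ only guarantees $m_1-i$ odd and $m_2$ is not under your control, this edge need not have odd span, so bipartiteness can fail for the $(r+1)$st edge even if you could place it.

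The paper's construction is the reflection of yours. It prunes $H$ to get $G_2$ and keeps the \emph{top} $m_2-1$ levels of $T_H$ as the top of $T_G$, then replaces each bottom star $S(u)$ (a vertex $u$ at level $m_2-1$ with its $d^{m_1}$ leaf children) by a whole copy of the height-$m_1$ graph $G_1$ (the $(d,1,g)$-graph). Because $G_1$ sits at the \emph{bottom}, its leaves are the leaves of $G$, and each leaf automatically carries one "short" augmenting edge to an ancestor inside its own $G_1$-copy — this is the $(r+1)$st edge, and it has odd span inside the bipartite $G_1$ with no dependence on $m_2$. The $r$ "long" edges inherited from $G_2$ span the top part; they have odd span because their span increased by $m_1-1$ (even, since $m_1$ odd) over their odd span in $G_2$. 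The girth argument then splits cleanly: a cycle with no long edge lies in one $G_1$-copy, so has length at least $g$; a cycle with a long edge contracts (collapsing each $G_1$-copy back to its star) to a closed walk in $G_2$ that uses the long edge only once, yielding a cycle of $G_2$ of length at most that of $C$, hence at least $g$. Your embedding-into-$H$ idea is morally the same move as the paper's pruning, but applying it to the branches rather than the top forces $T_1$ up top and destroys the very structure ($(d,1,g)$-leaves = leaves of $G$) that the $(r+1)$st edge needs.
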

These three lemmas imply the finiteness of $m(d,r,g)$ for all $d,r,g\in\NN$
with $g$ even and at least $4$.  Letting $P(r,g)$ denote the claim that
$m(d,r,g)$ is finite for all $d$, we prove $P(r,g)$ by induction on $g$. 
As the base step, $P(r,4)$ holds for all $r$ by Lemma~\ref{lem:11}.  If
$P(r,g)$ holds for all $r$, then we prove $P(r,g+2)$ by induction on $r$:
first $P(1,g+2)$ holds by Lemma~\ref{lem:12} (using the truth of $P(r,g)$ for
all $r$), and then $P(r+1,g+2)$ follows from $P(r,g+2)$ by Lemma~\ref{lem:13}
(since $P(1,g+2)$ also holds).  This completes the proof of
Theorem~\ref{thm:raugtrees}.

It remains to prove the three lemmas. Lemma \ref{lem:11} is trivial: just make
each leaf adjacent to its $r$ non-parent ancestors at odd distance from it in
the tree.

\begin{proof}[Proof of Lemma~\ref{lem:12}]
Let $G'$ with underlying tree $T'$ be a $(d,d^2,g)$-graph with height
$m(d,d^2,g)$.  Replace each leaf $v$ of $T'$ with a complete $d$-ary tree $T_v$
of height $2$ rooted at $v$.  Replace the augmenting edges from $v$ to its
ancestors by letting the $d^2$ lower endpoints be the leaves of $T_v$ instead
of $v$.  This produces a $1$-augmented complete $d$-ary tree $G$ of height
$2+m(d,d^2,g)$.  Since each augmenting edge has had its lower endpoint moved 
two levels down, $G$ is bipartite.

If $G$ has a cycle $C$ of length at most $g$, then $C$ must contain an
augmenting edge, say $xy$, with $y$ being a leaf in the underlying tree $T$ of
$G$.  Let $v$ be the leaf in $T'$ such that $y$ is in $T_v$.  Since $d_G(y)=2$,
the cycle $C$ contains the edge $yy'$ of $T_v$ incident with $y$.  Contracting
the added subtrees of height $2$ into leaves of $T'$ contracts $C$ to a closed
walk $C'$ in $G'$ of length less than $g$. Since $C'$ traverses edge $vx$ only
once, the remaining walk from $x$ to $v$ along $C'$ contains a path that with
$vx$ completes a cycle of $G'$ having length less than $g$, a contradiction.
Thus $G$ has no cycles of length less than $g+2$.
\end{proof}


\begin{proof}[Proof of Lemma \ref{lem:13}]
Fix $r$.  Assuming for all $d$ and $g$ that $m(d,r,g)$ and $m(d,1,g)$ are
finite, let $m_1=2\FL{m(d,1,g)/2}+1$ and $m_2=m(d^{m_1},r,g)$.  Note that
$m_1$ is the least odd integer that is at least $m(d,1,g)$.  We construct the
desired graph $G$ from two graphs $G_1$ and $G_2$.

For $G_1$ we use a $(d,1,g)$-graph having height $m_1$.  If $m(d,1,g)$ is odd,
then $m_1=m(d,1,g)$ and we use a shortest $(d,1,g)$-graph.  If $m(d,1,g)$ is
even, then $m_1=m(d,1,g)+1$, and we form $G_1$ from $d$ copies of a shortest
$(d,1,g)$-graph by adding a new root having the roots of those graphs as
children.

For $G_2$, let $d'=d^{m_1}$, and consider a $(d',r,g)$-graph $H$ having height
$m_2$.  Let $G_2$ be an induced subgraph of $H$ formed by starting from the
root of the underlying tree of $H$ and keeping only $d$ children of each
included vertex, except that all $d'$ children are kept at the last level.
Thus $G_2$ has an underlying tree $T'$ of height $m_2$, and deleting the
$d^{m_2-1}d'$ leaves of $T'$ yields a complete $d$-ary tree of height $m_2-1$.
All ancestors in $H$ of a leaf of $T'$ appear in $T'$, so each leaf of $T'$
has $r$ ancestors as neighbors in $G_2$.

Now we construct $G$ from $G_1$ and $G_2$.  In $G_2$, let $S(u)$ be the star
consisting of a vertex $u$ at level $m_2-1$ and its $d'$ leaf children.
Replace each $S(u)$ with a copy $G_1(u)$ of the graph $G_1$, so that the $d'$
leaves in $G_1$ each become one of the leaves in $S(u)$, inheriting the $r$
augmenting edges that were incident to that leaf in $G_2$.  We call the
augmenting edges obtained from $G_2$ in this way \emph{long edges}; the
augmenting edges in $G_1(u)$ are \emph{short edges}.

The underlying tree in our construction thus has two parts.  The top part is
the tree $T'$ for $G_2$ without its bottom level; it has height $m_2-1$.  The
bottom part, with height $m_1$, consists of copies of $G_1$.  Each leaf has one
incident short edge from $G_1$ and $r$ incident long edges inherited from $G_2$.
Thus $G$ is an $(r+1)$-augmented complete $d$-ary tree of height $m_1+m_2-1$.
When replacing one of the $r$ augmenting edges from a leaf of $G_2$ by a long
edge, the difference in the heights of the endpoints increases by $m_1-1$.
Since $m_1$ is odd, this change is even, so $G$ is bipartite.

A cycle $C$ in $G$ that contains no long edges is a cycle in a copy of $G_1$ and
hence has length at least $g$.  When $C$ contains a long edge, contracting a
subtree $G_1(u)$ into a star $S(u)$ contracts $C$ to a closed walk $C'$ in
$G_2$ using an augmenting edge $e$.  Since leaves of $G_1(u)$ correspond
bijectively to leaves of $S(u)$, the edge $e$ is not repeated in $C'$.  Hence
the other walk in $C'$ joining its endpoints contains a path that completes a
cycle with $e$.  Since this is a cycle in $G_2$ and has length at least $g$,
also $C$ has length at least $g$.

This completes the proof of Lemma~\ref{lem:13} and Theorem~\ref{thm:raugtrees}.
\end{proof}

\section{Applications}\label{sec:appl}

In a complete $k$-ary tree, a \emph{full path} is a path from the root to a
leaf.  Let $[k]=\{1,\dots,k\}$.  A \emph{$[k]$-coloring} is a $k$-coloring 
using the colors in $[k]$.

\begin{definition}\label{fullpath}
Given an ordering of the children at each internal vertex, the vertices of a
complete $k$-ary tree with height $m$ correspond naturally to the strings
of length at most $m$ from the alphabet $[k]$.  Define an edge-coloring $\phi$
by letting the color of each edge from parent $x$ to child $y$ be the index of
$y$ in the ordering of the children of $x$ (note that $\phi$ is not a proper
coloring).  For a $[k]$-coloring $f$ of the vertices of $T$, a full path $P$ is an
\emph{$f$-path} if the color of each non-leaf vertex on $P$ equals the color
of the edge to its child on $P$.
\end{definition}

Whenever $f$ is a $[k]$-coloring of a complete $k$-ary tree, there is a unique
$f$-path: just start from the root and repeatedly follow the descending edge
whose color matches the color of the current vertex.  Similarly, every full
path is an $f$-path for some $[k]$-coloring $f$.

\subsection{Large chromatic number and girth}\label{sec:chromatic}

As mentioned in the introduction, there exist $t$-uniform hypergraphs with
large chromatic number and girth.  Our $(d,r,g)$-graphs provide a remarkably
simple such construction.  It has the benefits of
being non-recursive (once $(d,r,g)$-graphs are constructed), 
and not involving
hypergraphs as inputs to the construction. Thus unlike the earlier
constructions which use hypergraphs to provide high girth graphs,
the method described here constructs high girth graphs and hypergraphs 
using only graphs.

\begin{theorem}[{\rm\cite{Er,EH,Lo,NR,Kr,KN}}]\label{thm:chromatic1}
For $k,g,t\in\NN$, there is a $t$-uniform hypergraph 
with girth at least $g$
and chromatic number larger than $k$.
\end{theorem}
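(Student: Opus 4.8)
The plan is to use a $(k^m, t-1, g)$-graph for a suitably large height $m$ and turn it into a $t$-uniform hypergraph whose vertices are the internal vertices of the underlying tree together with the augmenting structure, exploiting the $f$-path machinery of Definition~\ref{fullpath}. Concretely, I would take the underlying complete $d$-ary tree with $d = k^m$, where $m$ is chosen so that the internal vertices along any full path can encode a $[k]$-coloring in a useful way; the key point is that an ordering of the $k^m$ children at each internal vertex corresponds to strings of length $m$ over $[k]$, so following edges according to a $[k]$-coloring selects, along each full path of length $m$, an arbitrary sequence of $m$ color-choices. The hyperedges will be the leaves of the tree: each leaf $v$, together with its $t-1$ augmenting ancestors, forms a $t$-set. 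I would then show that in any $[k]$-coloring $f$ of the vertex set, the unique $f$-path argument forces some leaf to have all $t$ vertices of its hyperedge monochromatic, giving chromatic number exceeding $k$.

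The main work is the counting argument that forces a monochromatic hyperedge. First I would fix a $[k]$-coloring $f$ of the underlying tree. Since the tree is complete $d$-ary with $d = k^m$, I would think of each level as refining a choice; following the children-ordering dictated by $f$ as in the $f$-path definition, one can descend from the root making, at each internal vertex, a block of $m$ successive color decisions, and after enough internal levels the sequence of colors read off along the descent must contain a long monochromatic run (by pigeonhole on color classes), or more precisely, one can steer the descent so that the ancestors that will serve as augmenting endpoints of the terminal leaf all receive the same color as the leaf. The height $m(k^m, t-1, g)$ is finite by Theorem~\ref{thm:raugtrees}, and the girth of the resulting graph is at least $g$; one checks that contracting or otherwise the hypergraph girth (Berge girth) is also at least $g$ because any short Berge cycle in the hypergraph would yield a short cycle in the $(d,r,g)$-graph, since each hyperedge is a leaf plus ancestors and two hyperedges meet in at most the shared-ancestor structure.

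The step I expect to be the main obstacle is making the "steering" argument precise: I must guarantee that for \emph{every} $[k]$-coloring $f$ there is a leaf whose $t-1$ augmenting ancestors, plus the leaf itself, are all $f$-colored alike. The clean way is to build the augmenting edges and the value of $m$ together. Choose the $r = t-1$ augmenting ancestors of each leaf to be the $t-1$ internal vertices sitting at the tops of the last $t-1$ length-$m$ blocks along the descent (i.e.\ evenly spaced ancestors at odd distance, as in Lemma~\ref{lem:11}), and take $m$ large enough that within each block of $m$ color-decisions one can force the block's top vertex to match a prescribed color while still being free to prescribe the colors of the remaining blocks; iterating over the $t-1$ blocks and the leaf, a single color $c$ can be imposed on all $t$ vertices. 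Then no matter what $f$ is, the $f$-path exhibits such a leaf, so its hyperedge is monochromatic and $f$ is not a proper coloring. Finally I would remark that the girth condition transfers: a Berge cycle of length $\ell < g$ in the hypergraph corresponds to an actual cycle of length at most $g$ in the bipartite $(d, t-1, g)$-graph obtained by replacing each hyperedge by its leaf–ancestor edges, contradicting the girth bound. This completes the construction.
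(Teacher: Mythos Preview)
Your construction cannot work as stated, and the obstruction is structural rather than a matter of filling in details. Recall that a $(d,r,g)$-graph is by definition \emph{bipartite}. Hence every augmenting edge joins a leaf $v$ to an ancestor on the opposite side of the bipartition, so the $t$-set $\{v,a_1,\dots,a_{t-1}\}$ you take as a hyperedge always has $v$ in one part and all of $a_1,\dots,a_{t-1}$ in the other. The $2$-coloring of $V(T)$ given by the bipartition therefore leaves no hyperedge monochromatic, and your hypergraph has chromatic number at most $2$, not greater than $k$.

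Even setting this aside, the ``steering'' step is not a proof. The $f$-path of Definition~\ref{fullpath} is only defined for $[d]$-colorings of a $d$-ary tree; with $d=k^m$ and a $[k]$-coloring $f$ there is no canonical descent. If instead you mean to choose freely among the $k^m$ children at each step, you cannot force the child you reach to have any prescribed $f$-color: the adversary may give all $k^m$ children of some vertex the same unwanted color. You also propose placing the augmenting ancestors at fixed levels (``tops of the last $t-1$ blocks''), but then you are no longer using the $(k^m,t-1,g)$-graph furnished by Theorem~\ref{thm:raugtrees}, and there is no reason the girth bound survives. Finally, even a correct reduction of Berge cycles to graph cycles doubles the length (each hyperedge contributes a $P_3$ through its leaf), so one would need a $(d,r,2g)$-graph, not a $(d,r,g)$-graph.

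The paper's construction sidesteps all of this. It takes $d=k$ and $r=(t-1)k+1$ (not $d=k^m$ and $r=t-1$), and for each leaf $v$ it uses pigeonhole on the $\phi$-colors of the descending tree-edges along the full path to $v$ to select $t$ of the $r$ augmenting ancestors whose descending edges share a common $\phi$-color; these $t$ ancestors (leaf excluded) form the hyperedge $e_v$, and the vertex set of $H$ is $V(T)-L$. For any $[k]$-coloring $f$ of the internal vertices, the unique $f$-path ends at some leaf $v$, and along that path each vertex's $f$-color equals the $\phi$-color of its descending edge, so $e_v$ is automatically monochromatic. No steering is needed, and excluding the leaf from the hyperedge is exactly what avoids the bipartiteness trap above.
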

\begin{proof}
Let $G$ be a $(k,(t-1)k+1,2g)$-graph with underlying tree $T$ having leaf set
$L$.  Let $V'=V(T)-L$.  For $v\in L$, consider the full path $P$ ending at $v$.
Among the $(t-1)k+1$ neighbors of $v$ via augmenting edges, the pigeonhole
principle yields a set of $t$ neighbors of $v$ whose descending edges along $P$
have the same color; let $e_v$ be such a set of vertices in $V'$.  Let $H$ be
the $t$-uniform hypergraph with vertex set $V'$ and edge set
$\{e_v\st v\in L\}$.

Any $[k]$-coloring $f$ of $V'$ yields a unique $f$-path in $T$, ending at
some leaf $v$.  As a coloring of $H$, this makes the edge $e_v$ monochromatic.
Hence $H$ has no proper $k$-coloring.

Let $C$ be a shortest cycle in $H$, with edges $\VEC e1l$ in order and vertex
$x_i$ chosen from $e_{i-1}\cap e_i$ (subscripts modulo $l$).  Since $C$ is a
shortest cycle, $\VEC x1l$ are distinct.  Each edge of $H$ consists of
neighbors of a single leaf of $T$ via augmenting edges; let $v_i$ be the common
leaf neighbor of the vertices in $e_i$.  Form $C'$ in $G$ by replacing each edge
$e_i$ of $C$ by the copy of $P_3$ in $G$ having endpoints $x_{i-1}$ and $x_{i}$
and midpoint $v_i$.  Since for each leaf of $T$ we formed exactly one edge in
$H$, the leaves $\VEC v1l$ are distinct.  Hence $C'$ is a cycle, and its length
is twice that of $C$.  By the choice of $G$ as a $(d,k,2g)$-graph, $H$ has
girth at least $g$.
\end{proof}

The hypergraph $H$ in Theorem~\ref{thm:chromatic1} satisfies $|E(H)|=|L|=k^h$
and $|V(H)|=|V'|=\frac{k^h-1}{k-1}$, where $h=m(k,(t-1)k+1,2g)$.  Hence
$|E(H)|=(k-1)|V(H)|+1$.  However, $H$ may have (and actually does
have) dense subgraphs.
For $t=2$, we provide a different construction, inductive, of sparse graphs
with large girth and chromatic number.  A graph $G$ is sparse when it has a
small value of the \emph{maximum average degree}, defined to be
$\max_{H\esub G}\FR{\sum_{v\in V(H)}d_H(v)}{|V(H)|}$.  
Our construction has asymptotically lowest average degree even in the
broader class of triangle-free graphs. This follows from the lower bound
by Kostochka and Stiebitz~\cite{KS}: every $k$-chromatic
 triangle-free graph has maximum average degree at least
 $2k-o(k)$.

\begin{definition}\label{reduced}
Let $G$ be a $(d,r,g)$-graph with a specified ordering of the $d$ children at
each non-leaf vertex of the underlying tree $T$.  The corresponding
\emph{reduced $(d,r,g)$-graph} $H$ is obtained from $G$ as follows: given the
coloring $\phi$ of $E(G)$ from Definition~\ref{fullpath}, form $H$ from $G$ by
deleting at each non-root internal vertex $v$ of $T$ the subtree under the
descending edge whose color under $\phi$ is the same as the color of the edge
to the parent of $v$.  Each non-leaf vertex of $H\cap T$ has degree $d$ in $T$,
and $\phi$ is a proper edge-coloring of $H\cap T$.
\end{definition}

The reduced $(d,r,g)$-graph with underlying tree $T$ associated with the
edge-coloring $\phi$ as in Definition~\ref{reduced} still has a unique
$f$-path for any proper $[d]$-coloring $f$ of $T$.

\begin{theorem}\label{sparse}
For $k,g\in\NN$, there is a graph with girth at least $g$ that is 
not $k$-colorable and has maximum average degree at most $2(k-1)$.
\end{theorem}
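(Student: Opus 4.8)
The plan is to imitate the construction in Theorem~\ref{thm:chromatic1} but replace the hypergraph by a bipartite-type graph, using a \emph{reduced} $(d,r,g)$-graph so that the resulting object stays sparse. Concretely, I would take $G$ to be a reduced $(k,k,g')$-graph for a suitable $g'$ (roughly $g'=g$ works, since reduction only deletes edges and cannot shorten girth), with underlying tree $T$ and the edge-coloring $\phi$ from Definition~\ref{fullpath}. The key point about the reduced graph is that it still has a unique $f$-path for every proper $[k]$-coloring $f$ of $T$, and that every non-leaf, non-root vertex of $T$ has degree exactly $k$ in the tree while $\phi$ is a proper edge-coloring of $T$. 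I would then build $H$ on the vertex set $V(T)$ by keeping the tree edges of the reduced graph together with, for each leaf $v$, exactly one augmenting edge: namely the edge to that ancestor $a_v$ of $v$ whose descending edge (toward $v$) along the full path has color equal to $\phi$ of the edge from $a_v$ to its parent. Since $v$ has $k$ augmenting neighbors (one in each of the $k$ color classes along the path above it, because $\phi$ restricted to the path is a proper coloring, hence injective on a path of length $\le m$—wait, this needs $r=k$ and $m\ge k$, easily arranged), such an $a_v$ exists.

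The non-colorability argument then runs exactly as before: given a proper $[k]$-coloring $f$ of all of $H$, restrict it to the tree $T$; it is a proper $[k]$-coloring of $T$, hence determines a unique $f$-path $P$ ending at some leaf $v$. Along $P$, the color of each internal vertex equals the color of its descending edge on $P$; in particular the selected augmenting edge $v a_v$ has the property that $f(a_v)$ equals the color of the $a_v$-to-parent tree edge, which equals the color of the $a_v$-to-child edge along $P$, which equals $f(v)$ by the $f$-path property. So $v a_v$ is monochromatic under $f$, contradicting properness. Hence $H$ is not $k$-colorable.

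For sparseness I would compute the maximum average degree directly from the structure. In the underlying reduced tree, every non-root internal vertex has degree $k$ and the root has degree $k$; each leaf has tree-degree $1$ plus one augmenting edge, so degree $2$; and each internal vertex gains at most one extra incident augmenting edge per leaf below it whose chosen ancestor it is — I would choose the reduction and the selection rule so that each internal vertex is the chosen ancestor $a_v$ of at most one leaf $v$ in its subtree (this is exactly what the reduction buys: after reduction, the descending color classes partition the leaves, and along any root-leaf path the colors are distinct, so each internal vertex serves as $a_v$ for at most one descendant leaf). Then every vertex of $H$ has degree at most $k+1$... which is not good enough; the fix, and the real content, is the standard averaging over subgraphs: any subgraph $H'$ of $H$ either misses some leaf-subtree-portion and one peels it off, or one shows $|E(H')|\le (k-1)|V(H')|$ by a discharging/counting argument keyed to the fact that every augmenting edge can be charged to a distinct leaf and every leaf has tree-degree $1$. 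I expect the main obstacle to be precisely this averaging step: verifying that \emph{every} subgraph, not just $H$ itself, has average degree at most $2(k-1)$, which forces a careful choice of which single augmenting edge to keep at each leaf (e.g.\ always the one to the \emph{nearest} qualifying ancestor) so that deleting leaves repeatedly exposes a vertex of degree $\le 2(k-1)$, giving an inductive/degeneracy bound; the girth bound is then inherited from $G$ exactly as in the proof of Theorem~\ref{thm:chromatic1}, by lifting a short cycle of $H$ to a cycle of $G$ of the same length (here no doubling is needed since we use graph, not hypergraph, edges), so it suffices to take the girth parameter of $G$ to be $g$ itself.
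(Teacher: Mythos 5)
Your construction cannot work: the graph $H$ you build is a subgraph of a (reduced) $(d,r,g)$-graph, which is bipartite by definition, so $H$ is bipartite and therefore $2$-colorable --- hence $k$-colorable for every $k\ge 2$. No choice of which single augmenting edge to keep at each leaf can change this. This is a fatal gap: the paper's Theorem~\ref{sparse} is about \emph{graph} chromatic number, and a bipartite graph with one augmenting edge per leaf simply is not a candidate.

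There are also two concrete errors in the non-colorability argument even setting aside bipartiteness. First, your selection rule asks for an ancestor $a_v$ on the full path $P$ whose edge to its child on $P$ has the \emph{same} color under $\phi$ as its edge to its parent; but in a reduced graph $\phi$ is a proper edge-coloring of the tree, so these two edges (being incident at $a_v$) always get \emph{different} colors, and no such $a_v$ exists. Second, even with a repaired selection rule, the $f$-path property constrains only internal vertices (it says $f(a_v)$ equals the color of the edge from $a_v$ to its child on $P$); it says nothing about $f(v)$ for the leaf $v$, so the chain of equalities you write down to conclude $f(v)=f(a_v)$ does not close. The paper's proof works around exactly these obstacles by an induction on $k$: each leaf of a reduced $(k,r,g)$-graph is replaced by a whole copy of the graph $J_{k-1}$ (not $(k-1)$-colorable), with $r=|V(J_{k-1})|$ augmenting edges per leaf distributed so that the $f$-path forces the \emph{same} color $c$ onto the tree-neighbor of every vertex of that copy, reducing the problem to $(k-1)$-coloring $J_{k-1}$. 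That inductive replacement --- rather than a single augmenting edge --- is what breaks bipartiteness and delivers non-$k$-colorability, and it also feeds cleanly into the average-degree count, which you correctly identified as the other delicate point.
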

\begin{proof}
For fixed $g$, we construct such a graph $J_k$ by induction on $k$.
For the basis step, let $J_2$ be an odd cycle of length at least $g$.
Given $J_{k-1}$, let $r=|V(J_{k-1})|$.

Let $H$ be a reduced $(k,(r-1)k+1,g)$-graph, with underlying tree $T$ and
edge-coloring $\phi$.  For each leaf $v$ of $T$, consider the full path $P$
ending at $v$.  By the pigeonhole principle, 
some $r$ neighbors of $v$ in $H$
(via augmenting edges) have the same color on their descending edges along $P$.
Keep the augmenting edges from $v$ to one such set and delete the other
augmenting edges.  The resulting graph $H'$ is a reduced $(k,r,g)$-graph.

Next replace each leaf $v$ of $H'$ with a copy of $J_{k-1}$; each vertex in the
copy for $v$ inherits exactly one augmenting edge of $H'$ from $v$.  This is
the graph $J_k$.  The edge to $v$ in $T$ disappears; vertices at the level just
before the leaves no longer have edges to children.

Any proper $[k]$-coloring $f$ of $V(T)$ yields a unique $f$-path; it ends at
some leaf $v$.  Because it is an $f$-path, the colors on the vertices match the
colors on the descending edges.  Let $Q$ be the copy of $J_{k-1}$
corresponding to $v$ in $J_k$.  By the construction of $J_k$, there is a fixed
color $c$ that appears on the neighbor in $V(T)$ of each vertex in $Q$.  Since
$J_{k-1}$ is not $(k-1)$-colorable, we cannot complete a proper $k$-coloring of
$J_k$. 

A cycle in one copy of $J_{k-1}$ has length at least $g$.  For any other cycle
$C$ in $J_k$, contracting each copy of $J_{k-1}$ to a single vertex yields a
closed walk $C'$ in $H'$ using some augmenting edge.  Since each vertex in
a copy of $J_{k-1}$ inherits only one augmenting edge, each augmenting edge
is used only once in $C'$.  Hence as in the proof of Lemma~\ref{lem:13}, $C'$ contains
a cycle in $H'$.  This cycle has length at least $g$, so $C$ has length at
least $g$.

For the maximum average degree, consider a subgraph $F$, and let $F'=F-V(T)$.
Being contained in copies of $J_{k-1}$, the graph $F'$ has average degree at
most $2(k-2)$.  Augmenting edges add at most $1$ to the degree of each vertex
of $F'$ and hence at most $2$ to the degree-sum in $F$ for each vertex in $F'$.
Working upward in $T$, each added vertex in $F$ adds at most $k-1$ downward
edges, which contributes at most $2(k-1)$ to the degree-sum.  The root may add
$k$ downward edges, but the lowest vertex added from $T$ adds fewer than $k-1$.
Thus the degree-sum is at most $2(k-1)$ per vertex of $F$.
\end{proof}

\subsection{Choosability}\label{sec:choice}

A modification of the construction in Theorem~\ref{sparse} yields 
non-$k$-choosable bipartite graphs that are as sparse as can be.  As noted in
Theorem~\ref{choose}, every bipartite graph with maximum average degree at most
$2(k-1)$ is $k$-choosable.  Hence the graphs we construct in
Theorem~\ref{thm:choice} with just one extra edge are $(k+1)$-choice-critical.

It is well known (since~\cite{ERT}) that a bipartite graph consisting of two
even cycles sharing one vertex is not $2$-choosable; indeed, it is
$3$-choice-critical.

\begin{thm1}
For $k\ge2$ and $g\ge4$, there is a bipartite graph $G_k$ with girth at least
$g$ that is not $k$-choosable even though every proper subgraph has average
degree at most $2(k-1)$.
\end{thm1}

\begin{proof}
We proceed by induction on $k$ for even $g$.  To count edges in subgraphs, we
will orient $G_k$ and count edges by their tails.  The orientation gives each
vertex outdegree $k-1$ except a designated root vertex, which has outdegree
$k$, and every vertex will be reachable from the root.  Thus $G_k$ will have
$(k-1)\C{V(G_k)}+1$ edges, and every proper subgraph will have smaller
outdegree at some vertex and thus have average degree at most $2(k-1)$.

Let $G_2$ be the graph consisting of two $g$-cycles sharing one vertex, which
is the root.  Orient $G_2$ consistently along each of the two cycles.
The desired properties hold.

For $k\geq3$, suppose that $G_{k-1}$ has all the desired properties.
Let $r=|V(G_{k-1})|-1$, and let $H'$ be a reduced $(k,r,2g)$-graph, with
underlying tree $T$.  We modify the bipartite graph $H'$ slightly to guarantee
that $G_k$ will be bipartite.  Let $(A,B)$ be the bipartition of $G_{k-1}$,
with $A$ containing the root, and let $a=|A|-1$ and $b=|B|$.  Each leaf $v$ in
$T$ has $a+b$ incident augmenting edges.  Let $A(v)$ denote some set of $a$ of
these edges.  For the remaining $b$ augmenting edges incident to $v$, move
their endpoints in the tree one step closer to $v$ along the full path to $v$.
Let $B(v)$ denote this new set of $b$ augmenting edges at $v$.  Let $H$ be the
resulting graph; $H$ is a reduced $(k,r,g)$-graph except for not being
bipartite.

Form $G_k$ from $H$ by adding a copy of $G_{k-1}$ for each leaf $v$ of $T$,
merging $v$ with the root of $G_{k-1}$, with each vertex of $A$ in the copy of
$G_{k-1}$ (other than the root) inheriting one edge of $A(v)$ and each vertex
of $B$ in the copy of $G_{k-1}$ inheriting one edge of $B(v)$.  Since the 
vertices of $B$ have odd distance from $v$ in $G_{k-1}$, this guarantees that
$G_k$ is bipartite.

Designate the root of $T$ as the root of $G_k$.  Orient the edges of $T$ away
from the root, keep the orientation guaranteed by the induction hypothesis
on the copies of $G_{k-1}$, and orient the augmenting edges away from the
copies of $G_{k-1}$.  Because $H'$ is a reduced $(k,r,2g)$-graph, every vertex
has outdegree $k-1$ except that the root has outdegree $k$.

Let $L'$ be an assignment of lists of size $k-1$ to $G_{k-1}$ such that
$G_{k-1}$ is not $L'$-colorable and none of these lists intersects $[k]$.  
Form a list assignment $L$ for $G_k$ as
follows.  Put $L(x)=[k]$ for each non-leaf vertex $x$ in $V(T)$.  For each leaf
$v\in V(T)$ and each vertex $w$ of $V(G_{k-1})$, let $w_v$ denote the copy of
$w$ in the copy of $G_{k-1}$ at $v$.  Let $P$ be the full path in $T$ ending at
$v$.  Let $L(w_v)=L'(w)\cup\{c\}$, where $c$ is the color on the edge of $P$
descending from the neighbor of $w_v$ in $V(P)$.  In particular, when $w$ is
the root, the added color is the color on the edge of $T$ reaching $v$.

Let $f$ be a coloring of $G_k$ with $f(u)\in L(u)$ for $u\in V(G_k)$.  If $f$
is proper on $T$, then since $f(x)\in[k]$ for $x\in V(T)$, there is a unique
$f$-path $P$ in $T$.  In the copy of $G_{k-1}$ for the leaf $v$ at the end of
$P$, the color $c$ that was added to each list is now forbidden in a proper
coloring, leaving the list $L'(w)$ at $w_v$.  By the choice of $L'$, a proper
coloring cannot be completed from these lists.
\end{proof}


\subsection{Restricted list colorings}\label{sec:variations}
As described in the introduction, we now strengthen Theorem~\ref{thm:choice}
by proving non-choosability results for restricted list assignments.  We
consider both restrictions on the intersections of adjacent lists and 
restrictions on the size of the union of the lists.

Every graph is $L$-colorable (by choosing arbitrarily) when adjacent vertices
have disjoint lists, but $L$-colorability may fail when adjacent lists are
almost disjoint.  List coloring with intersection constraints on adjacent lists
has been studied by Kratochv\'il, Tuza, and Voigt \cite{KTV} and by F\"uredi,
Kostochka, and Kumbhat~\cite{FKK}.  We next strengthen Theorem~\ref{thm:choice}
by showing that our graph $G_k$ fails to be $L$-colorable for a
particular $k$-list assignment $L$ such that $|L(u)\cap L(v)|=1$ for every edge
$uv$.

\begin{theorem}\label{listcap}
Fix $g\in\NN$ with $g\equiv4\pmod6$.  For $k\ge2$, the bipartite graph $G_k$
with girth at least $g$ constructed in Theorem~\ref{thm:choice} admits a
$k$-list assignment $L$ such that $G_k$ is not $L$-colorable despite
satisfying $|L(u)\cap L(v)|=1$ for all $uv\in E(G_k)$.
\end{theorem}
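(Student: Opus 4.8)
The plan is to revisit the inductive construction of $G_k$ from the proof of Theorem~\ref{thm:choice} and, at each stage, replace the lists $[k]$ on the tree vertices and the lists $L'(w)\cup\{c\}$ on the copies of $G_{k-1}$ by carefully chosen concrete $k$-element sets so that adjacent lists always meet in exactly one color, while preserving the property that no valid coloring exists. The base case $G_2$ is the double $g$-cycle: I would exhibit by hand a $2$-list assignment in which adjacent lists share exactly one color and which is not colorable — this is the classical obstruction, and the congruence $g\equiv 4\pmod 6$ is presumably what makes the parities along the two cycles work out so that the "one common color'' pattern propagates consistently around each even cycle and still blocks all colorings.

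For the inductive step, I would take a $(k-1)$-list assignment $L'$ for $G_{k-1}$ with $|L'(u)\cap L'(v)|=1$ on every edge and no valid coloring (by the induction hypothesis), chosen so that $\bigcup L'$ is disjoint from $[k]$. On the tree part $T$ of $H$ (really $H'$, the reduced $(k,r,2g)$-graph), the natural choice is to give each tree vertex a list drawn from $[k]$ in such a way that a parent and child in $T$ share exactly one color — since $\phi$ is a proper edge-coloring of $H'\cap T$ with $d=k$ colors, I can use the edge color $\phi(xy)$ as the designated common color of the lists at $x$ and $y$, and then choose the remaining $k-1$ colors of each list to be a "rotation'' of $[k]$ so that this single-intersection property is forced globally. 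On a copy of $G_{k-1}$ at a leaf $v$, I set $L(w_v)=L'(w)\cup\{c_w\}$ where $c_w\in[k]$ is the color of the descending edge of the full path $P$ at the $V(T)$-neighbor of $w_v$, exactly as in Theorem~\ref{thm:choice}; then for an edge $w_vw'_v$ inside the copy the intersection is $L'(w)\cap L'(w')$ plus possibly $\{c_w\}\cap\{c_{w'}\}$, and for the tree-to-copy edges (the augmenting edges and the merged root) it is a singleton from $[k]$. The delicate point is to ensure these two contributions do not overlap and do not accidentally create a second common color; arranging the added colors $c_w$ to be distinct for vertices $w,w'$ that are adjacent in $G_{k-1}$ (which is possible since the $B$-vertices were pushed one level down, giving their neighbors a different edge color than the $A$-vertices) handles the inside-copy edges, and disjointness of $\bigcup L'$ from $[k]$ handles the rest.

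The non-colorability argument then runs exactly as before: given $f$ with $f(u)\in L(u)$, if $f$ restricted to $T$ is improper we are done since we only need to block \emph{proper} colorings — wait, here the subtlety is that with general lists there is no properness hypothesis, so I must argue that if $f$ is \emph{not} proper on some edge of $T$ then $f$ is already non-proper on $G_k$, and if $f$ is proper on $T$ then the unique $f$-path leads to a leaf $v$, the common color $c_w$ is used on the $V(T)$-neighbor of each $w_v$ and so is forbidden at $w_v$ in any proper extension, leaving $L'(w)$, from which $G_{k-1}$ cannot be properly colored. Since "not $L$-colorable'' means every list-coloring is improper somewhere, this chain of implications gives the result. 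The girth bound is inherited verbatim from Theorem~\ref{thm:choice}, since $G_k$ as a graph is unchanged — only the lists are new.

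The main obstacle I anticipate is the bookkeeping that guarantees $|L(u)\cap L(v)|=1$ on \emph{every} edge simultaneously: the tree edges, the edges inside the copies of $G_{k-1}$, the augmenting ("short'' and "long''-type) edges from a leaf of $T$ into its copy, and the edge identifying $v$ with the root of $G_{k-1}$. Each class needs its designated common color, and one must verify no edge gets zero or two common colors; the interaction between the rotation of $[k]$ used on $T$ and the fixed added colors $c_w$ on the copies is where the congruence $g\equiv4\pmod6$ and the odd-distance placement of $B$-vertices must be used precisely. I would organize this by fixing, once and for all, an explicit cyclic structure on $[k]$ (say identifying $[k]$ with $\mathbb{Z}_k$) and defining every list as an interval or translate, reducing all the intersection checks to arithmetic in $\mathbb{Z}_k$; the rest is then routine.
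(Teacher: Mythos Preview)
Your plan has a genuine gap at the heart of the tree-list construction. You propose to give each non-leaf tree vertex a $k$-element list ``drawn from $[k]$'' (and later, intervals in $\mathbb{Z}_k$), with the $\phi$-color of the edge $xy$ serving as the unique common element of $L(x)$ and $L(y)$. But a $k$-element subset of $[k]$ is all of $[k]$; every tree vertex would receive the same list, and adjacent tree lists would meet in $k$ colors, not one. No rotation or cyclic trick inside a palette of size $k$ can repair this: with $|L(x)|=|L(y)|=k$ and $|L(x)\cap L(y)|=1$ you need $|L(x)\cup L(y)|=2k-1$, so the palette on the tree must be large. Moreover, even if you enlarged the palette and used intervals in $\mathbb{Z}_N$, you lose the mechanism that drives the $f$-path: nothing ties a color in $L(x)$ to a specific child of $x$.

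The paper's key idea, which you are missing, is to abandon $[k]$ on the tree entirely and instead assign a \emph{distinct} color to every edge of $T$. In the reduced tree every non-leaf vertex has exactly $k$ incident tree edges, so defining $L(x)$ to be the set of those $k$ edge-colors gives a $k$-list; adjacent vertices then share precisely the color of the edge between them. The $f$-path still exists: $f(x)$ is the color of some incident edge, and properness at the edge to the parent forces that edge to go to a child. For the copies of $G_{k-1}$ the paper also uses a \emph{fresh} palette $L'_v$ at each leaf $v$, disjoint from the tree-edge colors and from all other copies, so the intersection checks on augmenting edges and on edges inside a copy reduce cleanly to singletons (your parity observation about the one-level shift for $B$-vertices is correct and is exactly what ensures the added tree-edge colors $c_x$ differ for adjacent $w_v,w'_v$). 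Your overall outline and the non-colorability argument are sound; what needs to change is the source of the tree lists.
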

\begin{proof}
For $k=2$, let $u$ be the common vertex of the two cycles in $G_2$.
Set $L(u)=\{1,2\}$.  On each of the two cycles, the number of remaining
vertices is a multiple of $3$.  Along one cycle, rotate through the lists
$\{1,3\},\{3,4\},\{4,1\}$.  This forces color $1$ onto a neighbor of $u$.
On the other cycle substitute $2$ for $1$, forcing color $2$ onto a neighbor
of $u$.  Now $u$ cannot be colored.  Adjacent lists share one color.
 
For $k\ge3$, let $T$ be the underlying tree in $G_k$.  Color the edges of
$T$ by distinct colors.  For a non-leaf vertex $x$ in $T$, let $L(x)$ be the
set of colors on the edges incident to $x$; thus lists adjacent via edges of
$T$ have one common color.
 
By the induction hypothesis, there is a $(k-1)$-list assignment $L'$ on
$G_{k-1}$ such that $G_{k-1}$ is not $L'$-colorable.  For each leaf $v\in V(T)$,
let $L'_v$ be a copy of this assignment indexing the colors by $v$, so that the
colors used for the copy $G'$ of $G_{k-1}$ at $v$ will not be used anywhere
else.  For each vertex $w$ of $V(G_{k-1})$ other than the root, let $w_v$ denote
the copy of $w$ in $G'$.  Let $P$ be the full path in $T$ ending at $v$.  Let
$x$ be the neighbor of $w_v$ in $V(P)$, and let $c_x$ be the color of the edge
in $P$ descending from $x$ along $P$.  Let $L(w_v)=L'_v(w)\cup\{c_x\}$.
Let $L(v)=L'_v(v)\cup\{c_v\}$, where $c_v$ is the color of the edge incident
to $v$ in $T$.

For any proper coloring $f$ of $T$ chosen from these lists, there is a unique
full path $Q$ such that the color of each non-leaf vertex is the color of the
edge to its child on $Q$, constructed from the root: that is, an $f$-path.
Let $v$ be the leaf reached by $Q$.  The parent of $v$ has been given color
$c_v$, so that color cannot be used at $v$.  Similarly, for each other vertex
in the copy of $G_{k-1}$ at $v$, the added color in its list has been used
on its neighbor in $T$.  Finding an $L$-coloring of $G_k$ thus requires finding
an $L'$-coloring of $G_{k-1}$, which does not exist.
\end{proof}

Perhaps surprisingly, for bipartite graphs larger intersections than in
Theorem~\ref{listcap} also guarantee $L$-colorability, giving 
the sharpness of
Theorem~\ref{listcap} in another way.

\begin{proposition}\label{twocommon}
If $G$ is a bipartite graph, and $L$ is a list assignment such that any
two adjacent lists have at least two common elements (the lists may have
any sizes at least $2$), then $G$ is $L$-colorable.
\end{proposition}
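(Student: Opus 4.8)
The plan is to reduce $L$-colorability of $G$ to a statement about proper colorings obtainable by a greedy-type argument, exploiting that any two adjacent lists share at least two colors while $G$ is bipartite with parts $X$ and $Y$. First I would set up the following trimming step: we may assume every list has size exactly $2$, since deleting colors from lists only makes the problem harder; so it suffices to prove the claim when $|L(v)|=2$ for all $v$. Now view each $2$-element list $L(v)=\{a_v,b_v\}$ as an edge $a_vb_v$ in an auxiliary multigraph $\mathcal{H}$ on the color set, one edge per vertex of $G$. The condition that adjacent lists in $G$ share at least two colors then forces, for each edge $uv$ of $G$, that $L(u)=L(v)$ as sets (two $2$-sets with two common elements are equal). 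Hence $L$ is constant on each connected component of $G$: on a component $C$ with common list $\{a,b\}$, properly $2$-coloring $C$ (possible since $G$, being bipartite, has each component bipartite) and assigning $a$ to one side and $b$ to the other yields an $L$-coloring of $C$. Taking the union over components finishes it.

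The one subtlety is the trimming step: after deleting colors to shrink every list to size $2$, we must not destroy the hypothesis ``any two adjacent lists have at least two common elements.'' This is the main obstacle, and it is genuine — arbitrarily deleting colors can kill the intersection condition. So instead I would not trim uniformly; rather I would argue directly on the original lists, as follows. Consider the relation on $V(G)$ generated by adjacency. For an edge $uv$, since $|L(u)\cap L(v)|\ge 2$, pick any two common colors; but to get a global coloring, process $G$ component by component and, within a component, use a spanning tree $S$. Root $S$ at some vertex $v_0$, choose any two colors $c_0,c_1\in L(v_0)$, and color $v_0$ with $c_0$. Then traverse $S$ in BFS order: when reaching a child $w$ of an already-colored vertex $x$ (colored, say, $c$), we need a color in $L(w)\setminus\{c\}$; since $|L(w)\cap L(x)|\ge 2$, the set $L(w)\cap L(x)$ contains a color $c'\ne c$, and we color $w$ with $c'$. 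This makes every tree edge properly colored. The remaining point is that every non-tree edge $uv$ is also properly colored.

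To handle the non-tree edges, I would observe that the BFS coloring in fact uses only colors from a $2$-element ``palette'' that propagates along the component. Concretely, define the palette at $v_0$ to be $\{c_0,c_1\}\subseteq L(v_0)$ with $c_0$ chosen as $v_0$'s color. Inductively, when $w$ is colored from parent $x$ (with $x$ colored $c\in L(x)\cap L(w)$), the pair $\{c,c'\}$ where $c'$ is $w$'s color lies in $L(x)\cap L(w)$; I claim one can maintain a global pair $\{p,q\}$ such that the color of every vertex colored so far lies in $\{p,q\}$ and $\{p,q\}\subseteq L(u)$ for every $u$ colored so far. This is where bipartiteness is used: since $G$ is bipartite, a BFS layering of the component gives a proper $2$-coloring, and one checks that the palette $\{p,q\}$ is forced to be constant across the whole component precisely because each adjacent pair of lists must contain $\{p,q\}$. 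Then any non-tree edge $uv$ has both endpoints colored from $\{p,q\}$, with $u$ in one BFS layer parity and $v$ in the other, hence they receive different colors — giving a proper $L$-coloring of the component, and of $G$ after taking the union over components. The hard part is verifying the ``constant palette'' invariant; it follows because if $u,v$ are adjacent with colors in $\{p,q\}$ and $\{p',q'\}$ respectively but these pairs are not both subsets of $L(u)\cap L(v)$, then $|L(u)\cap L(v)|\ge 2$ would be violated once one traces back through the tree, so the palettes must coincide.
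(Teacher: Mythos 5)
Your proof has a genuine gap: the ``constant palette'' invariant is false, and the BFS-greedy coloring as you describe it does not actually work.

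The palette claim fails already on a path $v_1 - v_2 - v_3$ with $L(v_1)=\{1,2,3\}$, $L(v_2)=\{2,3,4\}$, $L(v_3)=\{3,4,5\}$: adjacent intersections all have size $2$, but no $2$-set is contained in all three lists, so no single pair $\{p,q\}$ can serve as a component-wide palette. Your justification (``trace back through the tree'') is asserted rather than shown, and the hypothesis simply does not propagate transitively: $|L(u)\cap L(v)|\ge 2$ and $|L(v)\cap L(w)|\ge 2$ do not imply $|L(u)\cap L(w)|\ge 2$, even in a tree, and certainly not across non-tree edges.

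Moreover, once the palette claim is abandoned, the bare BFS algorithm (``color each child $w$ with an arbitrary color in $(L(w)\cap L(\text{parent}))\setminus\{\text{parent's color}\}$'') can produce improper colorings on non-tree edges. Take the $6$-cycle $v_1\cdots v_6 v_1$ with
$L(v_1)=\{1,2\}$, $L(v_2)=\{1,2\}$, $L(v_3)=\{1,2,3\}$, $L(v_4)=\{2,3\}$, $L(v_5)=\{2,3,4\}$, $L(v_6)=\{1,2,3\}$;
every adjacent pair shares exactly two colors. BFS from $v_1$ with tree edges $v_1v_2$, $v_1v_6$, $v_2v_3$, $v_6v_5$, $v_3v_4$ forces $v_1=1$, $v_2=2$, $v_6=2$, $v_3=1$, $v_5=3$; then $v_4$ may be chosen from $\{2,3\}$, and choosing $v_4=3$ collides with $v_5$ on the non-tree edge $v_4v_5$. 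So a smarter global rule is needed, and that is exactly where your argument is missing an idea. The paper supplies one with a single line: fix any linear order on the colors, give each vertex of one part the maximum color in its list and each vertex of the other part the minimum; a monochromatic edge would force the shared color to be simultaneously the max of one list and the min of the other, making it the unique common color, contradicting the hypothesis. Notice that this ordering trick avoids both of your obstacles (there is no trimming and no palette to maintain), and it handles non-tree edges uniformly.
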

\begin{proof}
Let $X$ and $Y$ be the parts of $G$, and index the colors in
$\bigcup_{v\in V(G)}L(v)$ as $c_1,\dots,c_t$.  Color each vertex of $X$ with
the highest-indexed color in its list and each vertex of $Y$ with the
lowest-indexed color in its list.  If two adjacent vertices receive the same
color, then it is the only common color in their lists, a contradiction.  Hence
the coloring is proper.
\end{proof}

When $G$ is $j$-colorable but not $k$-choosable, one may ask how large the
union $U$ of the lists must be in a $k$-list assignment $L$ such
that $G$ is not $L$-colorable.  Trivially $|U|>j$ is needed.  In fact,
one needs somewhat more, which reduces to $2k-1$ when $j=2$.

\begin{proposition}\label{smallcup}
Let $G$ be a $j$-colorable graph, with $j\le k$.  If $L$ is a $k$-list
assignment on $G$ such that $|\bigcup_{v\in V(G)}L(v)|\le\FR{j(k-1)}{j-1}$,
then $G$ is $L$-colorable.  Furthermore, the bound is sharp.
\end{proposition}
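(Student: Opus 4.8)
The plan is to split the statement into the positive half (an $L$-coloring exists when $|U| \le \frac{j(k-1)}{j-1}$) and the sharpness half (a $k$-list assignment with $|U| = \lfloor \frac{j(k-1)}{j-1}\rfloor + 1$ admitting no $L$-coloring on some $j$-colorable graph). For the positive half I would argue by a counting/pigeonhole comparison against a fixed $j$-coloring. Let $c\colon V(G) \to [j]$ be a proper $j$-coloring, and let $U = \{u_1,\dots,u_t\}$ with $t \le \frac{j(k-1)}{j-1}$, equivalently $t(j-1) \le j(k-1)$, i.e.\ $jt - t \le jk - j$, i.e.\ $j(k - t + 1) \ge t$... the useful rearrangement is $t - \frac{t}{j} \le k-1$, so $t - \lceil t/j\rceil \le k-1$. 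Now partition $U$ into $j$ color-classes $U_1,\dots,U_j$ as evenly as possible, so each $|U_i| \ge \lceil t/j\rceil$ when $i$ ranges suitably — actually I want each $|U_i| \le \lceil t/j \rceil$, which gives $t - |U_i| \ge t - \lceil t/j\rceil$, but I need the complementary bound. The right move: since $\sum_i |U_i| = t$, some indexing has $|U_i| \le \lceil t/j\rceil$ for all $i$, hence $|U \setminus U_i| \ge t - \lceil t/j \rceil$... Let me instead aim for: each $|U_i| \ge \lfloor t/j \rfloor$. Then for a vertex $v$ with $c(v) = i$, the list $L(v)$ has $k$ elements inside $U$, and $|L(v) \setminus U_i| \ge k - |U_i|$. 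I want to recolor $v$ using only colors from $U_i$; this works for every vertex of color $i$ simultaneously and produces a proper coloring (vertices in different $c$-classes get colors from disjoint $U_i$'s). The obstruction is a vertex $v$ with $L(v) \cap U_i = \varnothing$; to forbid this I need $k > |U| - |U_i| = t - |U_i|$, i.e.\ $|U_i| > t - k$, i.e.\ $|U_i| \ge t-k+1$. So I need a partition of $U$ into $j$ parts each of size at least $t-k+1$; this is possible iff $j(t-k+1) \le t$, which rearranges to $jt - jk + j \le t$, i.e.\ $t(j-1) \le j(k-1)$, exactly our hypothesis. So the positive direction follows: choose such a partition, recolor each $c$-class $i$ arbitrarily within $L(v) \cap U_i$ (nonempty by the size bound), and properness is automatic since colors used on distinct $c$-classes come from disjoint sets.

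For sharpness I would exhibit, for each $j \le k$, a $j$-colorable graph $G$ and a $k$-list assignment with $|U| = \lfloor \frac{j(k-1)}{j-1}\rfloor + 1$ and no proper $L$-coloring. The natural candidate is a complete multipartite-type gadget, or more economically a blow-up of $K_j$: take colors partitioned into $j$ blocks whose sizes are chosen so that any system of lists of size $k$ cannot avoid one designated color per block, forcing two adjacent vertices to collide. Concretely, when $j=2$ the target is $|U| = 2k-1$, and the known construction (two even cycles sharing a vertex, as in Theorem~\ref{thm:choice}, or more simply the bipartite incidence-type graph) uses $k$-lists drawn from $2k-1$ colors with no transversal; for general $j$ I would iterate this: let the vertex set be $j$ independent sets $V_1,\dots,V_j$ forming a complete $j$-partite skeleton on top of it, where $V_i$ is indexed by the $k$-subsets of a ground set $U_i$ of size $\C{U_i} = $ (as large as the budget allows), assign to the vertex indexed by $S$ the list $S$, and add edges between $V_i$ and $V_{i'}$ only between vertices whose lists collide appropriately — but this risks large girth issues. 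Since the proposition makes no girth demand, the simplest sharpness example is: let $n = \lfloor\frac{j(k-1)}{j-1}\rfloor$, take $U$ with $|U| = n+1$, and build $G$ as the "list-obstruction" graph on $\binom{U}{k}$-indexed vertices in $j$ classes with all cross edges between vertices whose lists are not disjoint; one checks $G$ is $j$-colorable (color all of $V_i$ with a color reserved to class $i$ — wait, that needs $L$-properness, not ordinary properness). I will use ordinary $j$-colorability: $G$ is $j$-partite by construction, hence $j$-colorable. No $L$-coloring exists because choosing a color $\gamma_i \in L(v_i)$ for one representative per class, by pigeonhole among $j$ classes and $n+1 = \lfloor \frac{j(k-1)}{j-1}\rfloor + 1$ colors... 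I need the counting to show some two adjacent classes are forced to share; this is exactly the failure of the partition argument above when $t = n+1$, since then $j(t-k+1) > t$, so no partition into parts of size $\ge t-k+1$ exists, meaning some block is "too small" and a vertex avoiding it can be built — and taking $G$ to contain all such problematic lists across all classes forces a monochromatic edge.

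The main obstacle I anticipate is the sharpness construction: getting a genuinely $j$-colorable graph (not merely one that is $j$-colorable "via lists") on which every $k$-list selection from an $(n+1)$-set fails to be proper, and doing the pigeonhole cleanly. The cleanest route is probably to take $G$ to be complete $j$-partite with each part a sufficiently rich family of $k$-subsets of $U$ (so every "bad" choice pattern is realized) and then argue: given any $L$-coloring, for each part $V_i$ consider the set of colors actually used on $V_i$; if each such set were disjoint across parts we would have a partition of (a subset of) $U$ into $j$ pieces, each piece being a union of $k$-sets — but the constraint forcing each used-color-set to be "spanning" enough, combined with $|U| = n+1$ being just past the threshold $\frac{j(k-1)}{j-1}$, makes disjointness impossible, so two adjacent parts share a color, contradicting properness on the complete $j$-partite skeleton. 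I would first nail down the exact threshold arithmetic (the identity $t(j-1) \le j(k-1) \iff t \le \frac{j(k-1)}{j-1}$ and its floor version), then write the positive direction as above, and finally give the sharpness graph and its one-line verification of $j$-colorability plus the pigeonhole for non-$L$-colorability.
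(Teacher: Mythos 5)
Your proposal is correct and matches the paper's proof in both directions: the positive half uses exactly the same idea (fix a proper $j$-coloring, partition $U$ into $j$ blocks each of size at least $|U|-k+1$, which forces every $k$-list to meet every block, and color each $c$-class within its block), and the sharpness half uses the same complete $j$-partite construction with one vertex per $k$-subset of $U$ in each part, concluding by the same count that each part must use at least $|U|-k+1$ colors so that $j(|U|-k+1)\le|U|$ is forced. The final write-up should simply streamline the false starts and state the lower bound $|S_i|\ge|U|-k+1$ (no $k$-subset can avoid the colors used on part $i$) explicitly, which is the content behind your ``spanning enough'' remark.
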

\begin{proof}
Let $f$ be a proper $j$-coloring of $G$.  Let $U=\bigcup_{v\in V(G)}L(v)$.
Split $U$ into disjoint sets $U_1,\dots,U_j$, with the smallest having size
$\FL{|U|/j}$.  Since $|U|\le\FR{j(k-1)}{j-1}$, the largest $j-1$ of the sets
together have size at most $k-1$.  (Note that
$\FL{\frac{j(k-1)}{j-1}}-\FL{\frac{k-1}{j-1}}=k-1$, and when
$|U|<\FL{\frac{j(k-1)}{j-1}}$ the conclusion becomes easier.)
Thus each $k$-list $L(v)$ intersects each $U_i$.  Hence each vertex $v$ can
choose a color from $L(v)\cap U_{f(v)}$.
Such a coloring is proper.

For sharpness, consider a universe $U$ of colors, and let $G$ be a complete
$j$-partite graph with $\binom{|U|}k$ vertices in each part.  Assign lists by
letting $L$ give each $k$-subset of $U$ as a list to one vertex in each part.
In an $L$-coloring, each color can be chosen in only one part.  Since a color
must be chosen from every vertex, on each part at least $|U|-(k-1)$ colors must
be chosen.  Hence $j(|U|-k+1)$ colors must be chosen.  Thus $L$-colorability
requires $j(|U|-k+1)\le |U|$, which is precisely the inequality
$|U|\le \FR{j(k-1)}{j-1}$.
\end{proof}

The sharpness examples in Proposition~\ref{smallcup} are very dense and have
small cycles.  The special case $j=2$ states that a bipartite graph is
$L$-colorable when $L$ is a $k$-list assignment with
$|\bigcup_{v\in V(G)}L(v)|\le 2k-2$.  This condition forces any two lists to
have at least two common elements, so Proposition~\ref{twocommon} is stronger
than Proposition~\ref{smallcup} for the case $j=2$.  Nevertheless, we show next
that Proposition~\ref{smallcup} remains sharp when $j=2$ even for sparse graphs
with large girth having just one extra edge beyond where Theorem~\ref{choose}
applies.

\begin{theorem}\label{list2k-}
Fix $k,g\in \NN$ with $g$ even and $k\ge2$.  There is a bipartite graph $H_k$
and a $k$-list assignment $L$ on $H_k$ such that $H_k$ is not $L$-colorable,
even though $|\bigcup_{v\in V(H_k)}L(v)|=2k-1$ and $H_k$ has girth at least
$g$ with each proper subgraph having average degree at most $2(k-1)$.
\end{theorem}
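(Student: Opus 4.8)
The plan is to mimic the inductive construction of $G_k$ from the proof of Theorem~\ref{thm:choice}, modifying only the list assignment so that the union of all lists has size exactly $2k-1$ rather than growing with $k$. The key observation is that in the proof of Theorem~\ref{thm:choice}, the colors used on the copies of $G_{k-1}$ are chosen to be disjoint from $[k]$; here I instead want all copies of $G_{k-1}$ (and indeed all of $H_k$) to draw their colors from a fixed ground set of size $2k-1$, which I will write as $[2k-1]$. So I would take $H_k$ to be built from a reduced $(k,r,g)$-graph $H$ (after the same parity fix as in Theorem~\ref{thm:choice} to keep bipartiteness) with a copy of $H_{k-1}$ glued at each leaf, exactly as before. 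All the structural properties --- bipartiteness, girth at least $g$, and the ``$(k-1)|V|+1$ edges, hence every proper subgraph has average degree at most $2(k-1)$'' bound --- carry over verbatim from the proof of Theorem~\ref{thm:choice}, since the construction of the graph is identical and only the lists change.

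The heart of the matter is the list assignment. For the base case $k=2$, I would take $H_2$ to be two even cycles (of length at least $g$) sharing a vertex $u$, and assign lists from the ground set $\{1,2,3\}$ of size $2k-1=3$: set $L(u)=\{1,2\}$, and around one cycle rotate lists so as to force color $1$ off $u$'s neighbor on that cycle onto $u$, and around the other cycle force $2$; since each cycle has an even number of non-$u$ vertices this is easy to arrange with lists drawn from $\{1,2,3\}$. (This is essentially the classical $K_{2,4}$-type obstruction with a bigger girth.) For the inductive step, suppose $H_{k-1}$ has a bad $(k-1)$-list assignment $L'$ with $|\bigcup L'| = 2k-3$; I want to produce a bad $k$-list assignment $L$ on $H_k$ with $|\bigcup L| = 2k-1$. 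The idea is: fix the ground set $U=[2k-1]$, and think of the two ``new'' colors as $\{2k-2, 2k-1\}$. I give every non-leaf tree vertex the list $[k]$ (or some fixed $k$-subset of $U$), and on each copy of $H_{k-1}$ at a leaf $v$, I use a copy of $L'$ translated into a $(k-1)$-subset of $U$, then augment each list by one more color --- the color of the descending tree-edge at its tree-neighbor --- exactly as in Theorem~\ref{thm:choice}. Following an $f$-path forces that augmented color to be used on the tree side, so a proper $L$-coloring of $H_k$ restricts to an $L'$-coloring of $H_{k-1}$, which does not exist.

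The main obstacle --- and the place I expect the real work to be --- is bookkeeping the colors so that the total union stays at $2k-1$ while still letting each step of the induction run. In Theorem~\ref{thm:choice} the union was allowed to blow up because copies of $G_{k-1}$ used fresh colors; here every copy of $H_{k-1}$ must reuse the same small palette, so I must choose the translate of $L'$ into $U$ and the ``forcing'' lists on the tree so that (i) the augmented color added to a vertex of a $H_{k-1}$-copy genuinely lies outside the support of $L'$ for that copy (so that forbidding it really leaves the bad list $L'(w)$), and (ii) the tree lists together with the forcing gadget on the top structure indeed pin down a unique $f$-path whose terminal leaf carries an uncolorable copy. I expect this can be done by the same ``rotate through $\{i,i+1\}$''-style device used in the $k=2$ case and in Theorem~\ref{listcap}: reserve two colors of $U$ as a ``rotor,'' use the reduced-tree structure (where $\phi$ is a proper edge-coloring) to route the forcing, and keep $L'$ supported on the complementary $2k-3$ colors. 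One should also double-check that $2k-1$ colors suffice in the base case and that the inductive increment is exactly $2$, so that $|\bigcup L|$ lands on $2k-1$ and not higher; the lower-bound side ($2k-1$ is necessary) is already supplied by Proposition~\ref{smallcup}, so only the construction (the upper direction of sharpness) needs to be carried out here.
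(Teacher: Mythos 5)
Your overall plan — reuse the $G_k$-construction from Theorem~\ref{thm:choice} and re-engineer only the lists — is the right idea, and your base case $k=2$ matches the paper's. But there is a genuine gap in the inductive step, and it is exactly the gap you flag yourself and then wave past.

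Here is the problem. In the $G_k$-construction, the color appended to the list of a vertex $w_v$ in a copy of $H_{k-1}$ is the $\phi$-color of the tree edge descending from the tree-neighbor of $w_v$ on the full path $P$. Different vertices $w_v$ in the \emph{same} copy of $H_{k-1}$ have different tree-neighbors on $P$, and there is no reason for those descending edges to all carry the same color; a priori they can collectively use all $k$ colors in $[k]$. For the forcing argument to leave a genuine $L'$-list at each $w_v$, the appended color must lie outside $L'_v(w)$. If the appended colors can be any element of $[k]$, and $\bigcup_w L'_v(w)$ must have size $2k-3$ inside a universe of size $2k-1$, then for $k\ge3$ there is no placement of $L'_v$ avoiding all of them. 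Your ``reserve two colors as a rotor'' sketch does not explain how you force the appended color to land only in a two-element set; the edge-coloring $\phi$ on the reduced tree is a proper edge-coloring, but that alone gives no uniformity along $P$.

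The paper's actual fix is the ingredient you're missing: start not with a reduced $(k,r,2g)$-graph but with a reduced $(k,(r-1)k,2g)$-graph, i.e.\ give each leaf $v$ many more augmenting edges than needed, and then \emph{select} which ones to keep by two applications of the pigeonhole principle. First pick $a$ augmenting edges whose descending edge color along $P$ is a common $c$; then move the remaining endpoints one level down (the parity fix) and pick $r-a$ of those whose descending edge color is a common $c'$; discard the rest. Now every vertex of the $H_{k-1}$-copy in the same partite set as the root gets appended color $c$, and every vertex in the other part gets $c'$, so $L'_v$ can be taken supported on $[2k-1]\setminus\{c,c'\}$ and the union of all lists is exactly $[2k-1]$. (One also needs the small extra touch of permuting $L'_v$ so the root's list avoids $c_v$, the color of the last tree edge into $v$.) Without this extra slack of augmenting edges and the double pigeonhole, the induction cannot close, so as written your proposal does not establish the theorem.
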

\begin{proof}
We use induction on $k$.  For $k=2$, let $H_2$ be $G_2$, the graph consisting
of two $g$-cycles sharing one vertex $u$.  Set $L(u)=\{1,2\}$.  On one cycle,
use lists $\{1,3\}$ and $\{1,2\}$ on the neighbors of $u$ and $\{2,3\}$ on the
rest of the cycle.  Since the number of copies of $\{2,3\}$ is odd, color $1$
must be chosen on a neighbor of $u$.  Interchanging $1$ and $2$ yields the
lists on the other cycle, forcing a neighbor of $u$ to have color $2$.  Now $u$
cannot be colored.  The union of the lists has three colors.

For $k\geq3$, let $r=|V(H_{k-1})|-1$, and let $a+1$ be the number of vertices
of $H_{k-1}$ in the partite set containing the root; note that $a<r$.  We
construct $H_k$ with a list assignment $L$.  Consider a reduced
$(k,(r-1)k,2g)$-graph with underlying tree $T$ and corresponding proper
$[k]$-edge-coloring of $T$.  The root of $T$ will be the root of $H_k$.

For each leaf $v$ of $T$, proceed as follows.  Let $P$ be the full path to $v$
in $T$.  Since $v$ has more than $(a-1)k$ augmenting edges, by the pigeonhole
principle there are $a$ such edges for which the edge along $P$ descending from
the neighbor of $v$ has the same color; call it $c$.  Move the other endpoints
of all $(r-1)k-a$ other augmenting edges at $v$ one step closer to $v$ along
$P$, as in the proof of Theorem~\ref{thm:choice}.  Since $(r-1)k-a>(r-a-1)k$,
by the pigeonhole principle there are $r-a$ of these remaining edges for which
the edge along $P$ descending from the neighbor of $v$ has the same color;
call it $c'$.  Discard all augmenting edges not chosen in these two steps.
After doing this for each leaf $v$ of $T$, the result is a reduced
$(k,r,g)$-graph except for not being bipartite.
 
For each leaf $v$ of $T$, add a copy $H'_v$ of $H_{k-1}$, merging its root with
$v$ and letting each non-root vertex inherit one of the augmenting edges at
$v$, with the vertices in the part opposite $v$ inheriting the $r-a$ edges
whose other endpoints were moved closer to $v$.  Let $H_k$ be the resulting
graph; it is bipartite, and the density bound for its subgraphs is computed as
for $G_k$ in Theorem~\ref{thm:choice}.  Arguing as for $G_k$ also shows that
$H_k$ has girth at least $g$.

Next we produce the list assignment $L$.  Assign list $[k]$ to each non-leaf
vertex of $T$.  By the induction hypothesis, for each leaf $v$ of $T$ there is
a $(k-1)$-list assignment $L'_v$ on $H'$ whose lists are contained in a
$(2k-3)$-set.  For this $(2k-3)$-set use $[2k-1]-\{c,c'\}$, discarding any
additional color if $c'=c$.  Also, let $c_v$ be the color of the edge reaching
$v$ in $T$.  Since $2k-3>k-1$ when $k>2$, we may permute the colors within
$L'_v$ to ensure that $L'$ does not assign color $c_v$ to $v$.

To define lists, form $L(v)$ by adding $c_v$ to the list given by $L'_v$ to the
root.  For $w\in V(H_{k-1})$ other than the root, let $w_v$ be the copy of $w$
in $H'_v$.  Set $L(w_v)=L'_v(w)\cup\{c\}$ if $w$ is in the same partite set as
the root of $H_{k-1}$, and otherwise set $L(w_v)=L'_v(w)\cup\{c'\}$.

It remains to show that $H_k$ is not $L$-colorable.  Let $f$ be a proper
coloring chosen from $L$.  Since the list on each non-leaf vertex of $T$ is
$[k]$ and the coloring is proper, there is a unique $f$-path $Q$ leading to a
particular leaf $v$.  Since the color of each non-leaf vertex on $Q$ agrees
with the color on the edge descending from it along $Q$, the color added to the
list of each vertex $w_v$ in the copy of $H_{k-1}$ at $v$ has been used on its
neighbor in $T$ and is now forbidden from use on $w_v$.  Finding an $L$-coloring
of $H_k$ thus requires finding an $L'$-coloring of $H_{k-1}$, which does not
exist.
\end{proof}

\section{The height of the trees in Theorem~\ref{thm:raugtrees}}\label{sec:height}
The underlying trees in our construction of $(d,r,g)$-graphs are astoundingly
tall; their height in terms of g is a version of the Ackermann function.
Here we show that even for 
$r=1$ and $d=2$, they must be very tall.  In the
discussion below all logarithms are in base $2$.

\begin{theorem}\label{thm:heightbound}
If $G$ is a $(2,1,g)$-graph with height $m$, then $g\le (4+o(1))\log(\log^* m)$.
\end{theorem}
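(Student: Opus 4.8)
The plan is to show that a $(2,1,g)$-graph with small height cannot exist by extracting from it a sequence of ever-shorter $(d,1,g')$-like structures and tracking how quickly the parameters must blow up. The key observation is that the three lemmas in Section~\ref{sec:raugtrees} essentially become necessary, not just sufficient, when we run the induction backwards: if a $(2,1,g)$-graph of height $m$ exists, I want to argue that a $(d,1,g-2)$-graph must exist with height roughly $\log_d$ of something comparable to $m$, for an appropriate $d$, and iterate this $g/2$ times. Concretely, I would first reduce to understanding how a long cycle is forced: in a $1$-augmented binary tree of height $m$, each leaf has a single augmenting edge going up to one of its $m$ ancestors. Two leaves whose augmenting edges hit nearby ancestors, together with the tree paths, create short cycles unless the tree is tall; the combinatorics here is a pigeonhole/Ramsey-type argument on the ancestor-heights of leaves.

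The main engine will be a contraction argument that is roughly the converse of Lemmas~\ref{lem:12} and~\ref{lem:13}. Given a $(2,1,g)$-graph $G$ of height $m$, I would look at the subtree structure and show that some internal vertex $v$ at a not-too-deep level $\ell$ has the property that the $2^{m-\ell}$ leaves below it, together with their augmenting edges, behave like a $(d,1,g-2)$-graph on the quotient where we contract the bottom two levels (as in Lemma~\ref{lem:12}, where $d$ gets replaced by $d^2$). Running this in reverse: the existence of a $(2,1,g)$-graph of height $m$ forces a $(2^{2},1,g-2)$-graph of height about $m-2$, then a $(2^{2^2},1,g-4)$-graph, and so on, so that after $t$ steps we are looking at a $(2\uparrow\uparrow t,1,g-2t)$-graph. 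Since a $(d,1,4)$-graph has height exactly $2\cdot1+1=3$ by Lemma~\ref{lem:11}, and the branching number $d$ after $\approx g/2$ steps is a tower of height $\approx g/2$, we get the bound $m \gtrsim 2\uparrow\uparrow(g/2)$, i.e.\ $\log^* m \gtrsim 2^{g/4}$, which rearranges to $g \le (4+o(1))\log\log^* m$. The bookkeeping of exactly how the height decreases at each step (and showing it decreases by a bounded additive amount while $d$ grows by exponentiation) is what produces the tower.

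The hardest step will be establishing the reverse direction cleanly: showing that a $(2,1,g)$-graph of height $m$ genuinely \emph{contains} (as a minor obtained by contracting bottom levels, or as an induced substructure after pruning children) a $(d,1,g-2)$-graph with the claimed parameters. The subtlety is that in a $(2,1,g)$-graph the augmenting edges are not arranged the way Lemma~\ref{lem:12}'s construction arranges them; a general $(2,1,g)$-graph could be wasteful. I expect to handle this by a cleanup/regularization lemma: first argue one may assume (at the cost of passing to a sub-binary-tree and losing only a constant factor in the height exponent) that augmenting edges are ``balanced,'' so that contracting the bottom two levels yields a graph where each new leaf has its augmenting edge far enough up, hence girth $g-2$; this is where a pigeonhole on the $m$ possible ancestor-targets of the roughly $2^{m}$ leaves does the work, and it is also where the hidden constants controlling the $o(1)$ live.

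Finally I would assemble the pieces: define $h_t$ to be the minimum height of a $(\,f(t),1,g-2t)$-graph forced at stage $t$ where $f(0)=2$ and $f(t+1)=f(t)^{2}$ (or a similar recursion), prove by induction that $h_t \ge$ (something, using the cleanup lemma and Lemma~\ref{lem:12} in reverse), note $h_{(g-4)/2}=3$ by Lemma~\ref{lem:11}, unwind to get a tower lower bound on $m$ in terms of $g$, and take $\log^*$ of both sides. Throughout, all the inequalities are monotone, so the $o(1)$ term simply absorbs the constant slack from the regularization step and from rounding $g$ to an even number; no delicate cancellation is needed, just careful tracking of the tower height, which equals $\Theta(g)$, against $\log^* m$.
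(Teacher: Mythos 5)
Your proposed route is genuinely different from the paper's, and it has a gap at exactly the place you flag as the hardest step, so let me describe both the divergence and the gap.

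The paper does not run the existence lemmas in reverse. Instead it gives a direct one-shot pigeonhole argument: it partitions the levels of the tree into $q\approx 2^{g/4}$ integer intervals $I_0,\ldots,I_q$ whose sizes grow like a tower of increasing height (driven by the recursion $k_{i+1}=2^{(k_i-g/2+4)/2}+k_i$), assigns each leaf a \emph{type} according to the interval containing its mate's level, pigeonholes to find a type $t$ hitting a $1/q$ fraction of leaves, then by averaging finds a vertex $u$ at level $m-k_{t-1}-1$ under which a $1/q$ fraction of leaves have type $t$. A second counting argument on the descendants of $u$ at level $m-(g/4-1)$ (the ``full'' vertices) and a pigeonhole on pairs of mates in the short path from $u$ to its ancestor then force a cycle of length at most $g$ unless $m>k_q$, which is what gives the tower and hence $g\le(4+o(1))\log\log^* m$. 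There is no inductive descent through a chain of $(d,1,g-2t)$-graphs.

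Your ``reverse Lemma~\ref{lem:12}'' step is where the proposal breaks. First, if you pass to the subtree below an internal vertex $v$, the augmenting edges from leaves of that subtree generally land on ancestors \emph{above} $v$, outside the subtree, so the subtree together with its inherited augmenting edges is not an augmented tree at all; you cannot simply restrict. Second, contracting the bottom two levels does not cleanly preserve ``girth minus $2$'': a short cycle in the contracted graph that visits $s$ of the new leaves lifts to a cycle in $G$ whose length grows by up to $4s$, so a cycle of length $g-3$ in the contracted graph only guarantees a cycle of length $O(g)$ in $G$, not $<g$; extra structure on where the augmenting edges land is needed before the contraction behaves like the inverse of Lemma~\ref{lem:12}, and this is precisely what the ``regularization lemma'' would have to supply, but you have not indicated how. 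Third, and most seriously, your bookkeeping is internally inconsistent and, in the version you actually commit to, too weak: you say the height decreases ``by a bounded additive amount'' per step over $\approx g/2$ steps, ending at height $3$ when the girth reaches $4$ by Lemma~\ref{lem:11}. That chain yields only $m\gtrsim g$, a linear bound, not a tower; the claimed $g\le(4+o(1))\log\log^* m$ requires the height to drop roughly logarithmically per step (so that $h_t\approx\log^{(t)}m$), as you hint once early on but never substantiate and then contradict. Without a mechanism forcing that logarithmic drop, the induction does not produce $\log^* m\gtrsim 2^{g/4}$. The paper's choice of the intervals $I_j$ with tower-growing widths is precisely the device that encodes the needed logarithmic collapse, and it is realized by a single counting argument rather than an inductive reduction.
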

\begin{proof}
For simplicity, we omit floor and ceiling signs; they are not crucial.

For $g\in\mathbb{N}$, let $q=2^{g/4-2}$.  Let $k_{-1}=-1$, $k_0=g-1$, and for
$0\leq i<r$ set
\[ k_{i+1} = 2^{(k_i-g/2+4)/2}+k_i. \]
This yields $g\approx4\log(\log^*k_q)$.  
Let $G$ be a 1-augmented binary tree of height $m$, and let $g$ be the least
integer such that $k_q\ge m$.  We will find in $G$ a cycle of length at most
$g$.

Define integer intervals $I_0,\ldots,I_q$ by $I_j=[m-k_j, m-k_{j-1}-1]$
(deleting any negative elements).  These intervals group the levels in $T$.
The number of levels in $I_j$ is at most $k_j-k_{j-1}$, the value of which is
roughly a tower of height $j$.  However, since we only choose $g$ so that
$k_q\ge m$, the least $j$ with $k_j\ge m$ may be less than $q$, so the
intervals toward the end of the list may be empty.

Let the \emph{mate} of a leaf of $T$ be the other endpoint of its augmenting
edge in $G$.  Let the \emph{type} of the leaf be $j$ if the level of its mate
lies in $I_j$.  We may assume that no leaf has type $0$, since otherwise $G$
has a cycle of length at most $g$.  With each leaf having type in the integer
interval $[1,q]$, some type is assigned to at least $1/q$ of the leaves of $G$.
Fix such a type $t$. 

By averaging, for some vertex $u$ at level $m-k_{t-1}-1$ at least $1/q$ of the
leaves under $u$ have type $t$.  Let $C$ denote the set of all leaves of type
$t$ under $u$.  Let $v$ be the ancestor of $u$ at level $m-k_t$ (or level $0$
if $m<k_t$).  For each leaf $x\in C$, the mate of $x$ is on the $u,v$-path $P$ in
$T$.  Note that $|V(P)|\leq k_t-k_{t-1}=2^{(k_{t-1}-g/2+4)/2}$.

The vertex $u$ has $2^{k_{t-1}-g/4+2}$ descendants at level $m-(g/4-1)$; call
this set $D$.  The subtree rooted at any $y\in D$ has $2q$ leaves.  Call $y$
\emph{full} if at least two leaves of $T$ under $y$ belong to $C$.  Let
$\beta|D|$ be the number of full vertices in $D$.  The number of leaves under
$u$ is $2q|D|$.  Allowing all leaves under full vertices of $D$ and at most one
leaf under non-full vertices, the number of leaves in $C$ under $u$ is at most
$(2q\beta+1)|D|$.  The fraction of leaves under $u$ in $C$ is thus at most
$\beta+\frac{1}{2q}$, but by the choice of $u$ it is at least $1/q$.  Thus
$\beta\geq\frac{1}{2q}$.

Hence at least $2^{k_{t-1}-g/2+3}$ vertices of $D$ are full.  Under each full
vertex of $D$ some two leaves $v$ and $v'$ have mates in $P$.  If $v$ and $v'$
have the same mate $x$, then $x$ completes a cycle of length at most
$2+2(g/4-1)<g$ with the path joining $v$ and $v'$ in $T$.  Otherwise, each full
vertex of $D$ has two leaves under it whose mates are distinct vertices of $P$.
Since the number of full vertices of $D$ exceeds $\binom{|V(P)|}{2}$, by the
pigeonhole principle some two vertices $y, y' \in D$ yield the same pair
$x,x'\in V(P)$ of mates of two leaves under them.  The paths joining those
leaves in the subtrees under $y$ and $y'$ and the edges from those leaves to
$x$ and $x'$ form a cycle of length at most $2(g/4-1)+2(g/4-1)+4$, which equals
$g$.
\end{proof}

\medskip
{\bf Acknowledgment.} The authors thank a referee for helpful comments.

\end{document}